\title[BD non-equivalence in substitution tilings]{Bounded Displacement Non-Equivalence In Substitution Tilings}
\date{}
\author{Dirk Frettl\"oh}
\address{Universit\"at Bielefeld, Germany, {\tt dfrettloeh@techfak.uni-bielefeld.de}}
\author{Yotam Smilansky}
\address{The Hebrew University of Jerusalem, Israel$^*$, {\tt yotam.smilansky@rutgers.edu}}
\author{Yaar Solomon}
\address{Ben-Gurion University of the Negev, Israel, {\tt yaars@bgu.ac.il}\bigskip \newline \textit{$^*$Current affiliation}: Rutgers University, NJ, USA.}
\newcommand{\N}{{\mathbb{N}}}
\newcommand{\Z}{{\mathbb{Z}}}
\newcommand{\R}{{\mathbb{R}}}
\newcommand{\XX}{\mathbb{X}}
\newcommand{\YY}{\mathbb{Y}}
\renewcommand{\AA}{\mathcal{A}}
\newcommand{\FF}{{\mathcal{F}}}
\newcommand{\PP}{\mathcal{P}}
\newcommand{\QQ}{\mathcal{Q}}
\newcommand{\RR}{\mathcal{R}}
\newcommand{\TT}{\mathcal{T}}
\newcommand{\bd}{\stackrel{\scriptscriptstyle \rm BD}{\sim}}
\newcommand{\dotcup}{\mathaccent\cdot\cup}
\newcommand{\df}{{\, \stackrel{\mathrm{def}}{=}\, }}
\DeclareMathOperator{\supp}{\mathrm{supp}}
\newcommand{\absolute}[1] {\left|{#1}\right|}
\newcommand{\norm}[1]{\left\|{#1}\right\|}
\newcommand{\inpro}[2]{\langle{#1},{#2}\rangle}
\newcommand {\ignore}[1]  {}
\newcommand {\act}[1]{{#1}}
\theoremstyle{plain}
\newtheorem{thm}{Theorem}[section]
\newtheorem*{thmnonum}{Theorem}
\newtheorem{lem}[thm]{Lemma}
\newtheorem{prop}[thm]{Proposition}
\newtheorem{cor}[thm]{Corollary}
\theoremstyle{definition}
\newtheorem{definition}[thm]{Definition}
\numberwithin{equation}{section}
\newif\ifdraft\drafttrue
\begin{document}
\begin{abstract}
      In the study of aperiodic order and mathematical models of quasicrystals, questions regarding equivalence relations on Delone sets naturally arise. This work is dedicated to the bounded displacement (BD) equivalence relation, and especially to results concerning instances of non-equivalence. We present a general condition for two Delone sets to be BD non-equivalent, and apply our result to Delone sets associated with tilings of Euclidean space. First we consider substitution tilings, and exhibit a substitution matrix associated with two distinct substitution rules. The first rule generates only periodic tilings, while the second generates tilings for which any associated Delone set is non-equivalent to any lattice in space. As an extension of this result, we introduce arbitrarily many distinct substitution rules associated with a single matrix, with the property that Delone sets generated by distinct rules are non-equivalent. We then turn to the study of mixed substitution tilings, and present a mixed substitution system that generates representatives of continuously many distinct BD equivalence classes.
\end{abstract}

\maketitle

\section{Introduction}\label{sec:introduction}

A point set $\Lambda \subset \R^d$ is called a \emph{Delone set} if it is both \emph{uniformly discrete} and \emph{relatively dense}, that is, if there exist constants $r$ and $R$ so that the intersection of $\Lambda$ with every ball of radius $r$  contains at most one point, and the intersection of $\Lambda$ with every ball of radius $R$ is non-empty. We say that two Delone sets $\Lambda$ and $\Lambda'$ are
\emph{bounded displacement (BD) equivalent}, and denote $\Lambda \bd \Lambda'$, if there exists a
\emph{scaling constant} $\alpha>0$ and a bijection $\phi: \Lambda \to \alpha \Lambda'$ satisfying
\begin{equation}\label{eq:BD_condition}
\sup\{\norm{x-\phi(x)} \mid x\in\Lambda\}<\infty,
\end{equation}
where $\norm{x}$ is the Euclidean norm of a vector $x\in\R^d$.  A bijection $\phi$ satisfying \eqref{eq:BD_condition} is called a \emph{BD-map}. Note that unlike the definition of BD equivalence, in the definition of a BD-map no scaling is allowed. For general discrete sets, such maps were previously studied by Duneau and Oguey \cite{DO1,DO2}, Laczkovich \cite{Laczk} and Deuber, Simonovits and S\'{o}s \cite{DSS95}.

BD equivalence is often defined under the restriction $\alpha=1$, which is perhaps a more intuitive definition, since under this restriction the equivalence of $\Lambda$ and $\Lambda'$ implies that $\Lambda$ can be transformed into $\Lambda'$ by translating 
each point individually by some uniformly bounded amount. Nevertheless, our less restrictive definition ``mods out'' scaling issues, and so is better suited for the study of structural properties of Delone sets. For instance, two lattices of different covolume in $\R^d$ cannot admit a BD-map, and therefore are not equivalent under the restrictive definition, whereas under our definition of the BD equivalence relation all lattices in $\R^d$ belong to the same class, see \cite{DO2} or \cite{HKW}.

In this work we focus on instances of BD non-equivalence. In the context of Delone sets, the study of the BD equivalence relation is frequently paired with that of the less restrictive bi-Lipschitz equivalence, where two Delone sets $\Lambda$ and $\Lambda'$ in $\R^d$ are called \emph{bi-Lipschitz (BL) equivalent} if they admit a bi-Lipschitz bijection between them. Since BD equivalence of Delone sets implies BL equivalence, the study of BD non-equivalence is also motivated by questions on BL non-equivalence, which are often highly non-trivial. The mere existence of Delone sets which are not BL equivalent to a lattice was regarded as an interesting open problem until it was finally established independently by Burago and Kleiner \cite{BK1}  and by McMullen \cite{McMullen}. Explicit constructions were later presented in \cite{CortezNavas} and in \cite{Garber}, and it was shown in \cite{Magazinov} that in fact there exist continuously many BL equivalence classes. Nevertheless, there are still many open problems concerning BL non-equivalence.
\\

Recall that a Delone set $\Lambda$ is \textit{periodic} if the group of its translational symmetries $\{ t \in \R^d \mid \Lambda + t = \Lambda \}$ is cocompact, and \emph{non-periodic} if this group is trivial. It was shown in \cite{DO1} that all periodic Delone sets are BD equivalent to each other. Hence, in order to produce examples that are BD non-equivalent to any lattice, it is promising to study non-periodic Delone sets. 

An especially interesting and well studied family of non-periodic Delone sets consists of point sets associated with substitution tilings, defined by placing a point in every tile. A substitution tiling is defined by a substitution rule on a finite set of prototiles, to which the associated Delone set is said to correspond. A recent extension of the construction of substitution tilings is that of mixed substitution tilings, which involves the mixing of several distinct substitution rules to define tilings of the space. The study of related constructions includes \cite{GaeMal}, \cite{Rust} and \cite{SchmiedingTrevino}. All required definitions for substitution and mixed substitution rules and tilings, which are the main objects of interest in this work, are given in \S \ref{sec:preliminaries}. For a comprehensive introduction to substitution tilings, as well as to the field of aperiodic order in general, we refer to \cite{BaakeGrimm}.  
 
Early results concerning the BD equivalence relation and Delone sets associated with substitution tilings include \cite{ACG}, \cite{HoZ} and \cite{Solomon11}. These results were further improved by the third author in \cite{Solomon14} where the following criterion was established. 

\begin{thmnonum}\cite[Theorem 1.2]{Solomon14}
	Let $\varrho$ be a primitive substitution rule on polygonal prototiles in $\R^d$, and let $\Lambda$ be a corresponding Delone set. Denote by $M_{\varrho}$ the associated substitution matrix and its eigenvalues by $\lambda_1>\absolute{\lambda_2}\ge \ldots \ge \absolute{\lambda_n}$, see \S\ref{sec:preliminaries} for precise definitions. Let $t\ge 2$ be the minimal index for which the eigenvalue $\lambda_t$ has an eigenvector with non-zero sum of coordinates. 
	\begin{itemize}		
		\item[(I)]
		If $\absolute{\lambda_t}<\lambda_1^{\frac{d-1}{d}}$ then $\Lambda$ is BD equivalent to a lattice. 
		
		\item[(II)]
		If $\absolute{\lambda_t}>\lambda_1^{\frac{d-1}{d}}$ then $\Lambda$ is not BD equivalent to any lattice.
	\end{itemize} 
\end{thmnonum}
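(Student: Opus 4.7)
The plan is to reduce BD equivalence to a lattice to a discrepancy estimate of Laczkovich / Duneau--Oguey type, and then control that discrepancy via spectral analysis of $M_\varrho$.

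\textbf{Step 1 (Discrepancy criterion).} For a Delone set $\Lambda \subset \R^d$ of density $\rho$, I would invoke the criterion that $\Lambda \bd \Z^d$ if and only if there exists $C>0$ such that
\[
\bigl|\,|\Lambda \cap A| - \rho\,\mathrm{vol}(A)\,\bigr| \leq C\,\mathrm{vol}_{d-1}(\partial A)
\]
for every sufficiently regular $A \subset \R^d$ (say, convex). One direction is elementary; the other uses a Hall / transportation argument in the spirit of Laczkovich and Burago--Kleiner.

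\textbf{Step 2 (Super-tile counts).} For a prototile $p_i$, let $v^{(k)}_i \in \N^n$ be the vector whose $j$-th coordinate counts $p_j$-tiles in the $k$-th iterated substitution $\varrho^k(p_i)$, so $v^{(k)}_i = M_\varrho^{\,k} e_i$. The super-tile $\varrho^k(p_i)$ has volume $\lambda_1^k \mathrm{vol}(p_i)$ and diameter $O(\lambda_1^{k/d})$, hence boundary $(d-1)$-measure $O\bigl(\lambda_1^{k(d-1)/d}\bigr)$. Decompose $e_i$ in the generalized eigenbasis of $M_\varrho$. The Perron component contributes exactly $\rho\,\mathrm{vol}(\varrho^k(p_i))$ to the point count; by the definition of $t$, the eigenvectors attached to $\lambda_2,\ldots,\lambda_{t-1}$ have zero coordinate sum and therefore contribute $0$ to the total count; the first surviving deviation comes from $\lambda_t$ and scales as $|\lambda_t|^k$, with lower-order terms from $\lambda_{t+1},\ldots,\lambda_n$.

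\textbf{Step 3 (Case (I)).} Assume $|\lambda_t| < \lambda_1^{(d-1)/d}$. Given a convex $A$, choose the largest level $k$ such that super-tiles of level $k$ fit inside $A$, tile the bulk of $A \cap \Lambda$ by level-$k$ super-tiles, and handle a collar of width $O(\lambda_1^{k/d})$ near $\partial A$ recursively by super-tiles of lower levels. Summing the per-super-tile discrepancies from Step 2 gives a geometric series in the ratio $|\lambda_t|/\lambda_1^{(d-1)/d} < 1$, yielding total discrepancy $O\bigl(\mathrm{vol}_{d-1}(\partial A)\bigr)$. Step 1 then delivers $\Lambda \bd \Z^d$.

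\textbf{Step 4 (Case (II)).} Assume $|\lambda_t| > \lambda_1^{(d-1)/d}$. By Step 2, some prototile $p_i$ yields super-tiles $\varrho^k(p_i)$ whose discrepancy grows like $c_i |\lambda_t|^k$ with $c_i \neq 0$ (the non-vanishing of the projection onto the $\lambda_t$-eigenspace against the all-ones functional is exactly the hypothesis on $t$). This grows strictly faster than $\lambda_1^{k(d-1)/d}$, the boundary measure of the super-tile. By primitivity, copies of $\varrho^k(p_i)$ appear in any tiling generated by $\varrho$ for arbitrarily large $k$, so the criterion of Step 1 fails for every lattice and $\Lambda$ is BD equivalent to none.

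The main obstacle is Step 3: one must bound the discrepancy on arbitrary convex sets, not merely on super-tiles, which requires controlling the mismatch between $\partial A$ and the super-tile boundaries at each level, and verifying that the geometric series converges exactly at the critical exponent $(d-1)/d$. The delicate point in Step 4 is to confirm that cancellations among the $c_i$ cannot suppress the leading $|\lambda_t|^k$ growth, and here the defining property of $t$ is essential.
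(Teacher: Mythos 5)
First, a point of orientation: the paper does not prove this statement. It is quoted verbatim from \cite[Theorem 1.2]{Solomon14} as background, so there is no in-paper proof to compare against. What the paper does contain is one half of the machinery your outline relies on: Theorem \ref{thm:non_BD_criterion} (proved via Hall's marriage theorem in Proposition \ref{prop:BD_criterion} together with Lemma \ref{lem:boundary_est.}) is precisely the ``discrepancy growing faster than boundary measure implies no BD-map'' direction of your Step 1, and the proof of Theorem \ref{thm:period+nonperiodic_same_matrix} in \S\ref{sec:periodic+nonperiodic_from_one_matrix} is an instance of your Step 4, using sub-patches $P_m$ of super-tiles whose count deviates from $\alpha\mu_2(\supp(P_m))$ by $m3^m$ while $\mu_1(\partial \supp(P_m))=O(3^m)$. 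Your overall strategy --- Laczkovich's criterion combined with a spectral analysis of $M_\varrho^k e_i$ --- is indeed the strategy of the cited proof, so at the level of architecture the proposal is sound.

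Three places where the outline has genuine gaps. (a) Laczkovich's criterion \cite[Theorem 1.1]{Laczk} (and Proposition \ref{prop:BD_criterion} here) tests the discrepancy against \emph{all} sets in $\QQ_d^*$, i.e.\ arbitrary finite unions of lattice unit cubes, not against convex sets; a discrepancy bound on convex sets alone does not yield BD equivalence to a lattice, so Step 3 must be run for arbitrary $A\in\QQ_d^*$. The decomposition itself (at level $k$, the number of super-tiles meeting $\partial A$ is $O(\mu_{d-1}(\partial A)/\lambda_1^{k(d-1)/d}+1)$, then telescope over $k$) does not use convexity, so this is fixable, but as written Steps 1 and 3 do not combine to give case (I). (b) In Step 2, the claim that the eigenvalues $\lambda_2,\ldots,\lambda_{t-1}$ contribute nothing because their eigenvectors have zero coordinate sum is not automatic when $M_\varrho$ is not diagonalizable: a \emph{generalized} eigenvector $v$ for $\lambda_j$ with $j<t$ may satisfy $\inpro{\mathbf{1}}{v}\neq 0$ even when every genuine eigenvector of $\lambda_j$ has zero coordinate sum, producing a term of size comparable to $k^s\absolute{\lambda_j}^k$ with $\absolute{\lambda_j}\ge\absolute{\lambda_t}$. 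This is harmless in case (II) but must be ruled out (or handled) in case (I) before asserting that the first surviving deviation has order $\absolute{\lambda_t}^k$. (c) In Step 4, beyond the cancellation issue you flag, the deviation is a sum $\sum_j p_j(k)\lambda_j^k$ with possibly complex $\lambda_j$ of equal modulus, so it need not dominate $\lambda_1^{k(d-1)/d}$ for \emph{every} $k$; one only gets this along a subsequence of $k$'s, which suffices because condition \eqref{eq:non_BD_condition} only requires divergence along the chosen sequence $A_m$. Finally, to exclude all lattices rather than just $\Z^d$ you need that all lattices of equal covolume are BD equivalent (\cite{DO2}, \cite{HKW}) together with the density obstruction (Corollary \ref{cor:different_densities_implies_not_BD}) for the remaining covolumes; this is exactly how \S\ref{sec:periodic+nonperiodic_from_one_matrix} organizes its own non-equivalence argument.
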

 
This and previous results all rely on a criterion by Laczkovich \cite[Theorem 1.1]{Laczk}, which gives a necessary and sufficient condition for BD equivalence to $\Z^d$.

\subsection{Statement of Results}

Theorem \ref{thm:non_BD_criterion} below establishes a sufficient condition for BD non-equivalence. Note the resemblance to the sufficient condition given by \cite[Lemma 2.3]{Laczk} for two discrete sets to be BD equivalent. The proof of our Theorem \ref{thm:non_BD_criterion} is along the lines of the proof of \cite[Theorem 1.1]{Laczk}, where Laczkovich only gives the result for $\Lambda_1$ being a lattice. 

Given $x=(x_1,\ldots,x_d) \in \R^d$, denote by $C(x)$ the axis-parallel unit cube $\bigtimes_{i=1}^d [x_i-\frac{1}{2},x_i+\frac{1}{2})$ centered at $x$. 
Denote by $\QQ_d$ the set $\left\{C(x) \mid x\in \Z^d \right\}$ of lattice centered unit cubes, and let $\QQ_d^*$ be the collection of all finite unions of elements of $\QQ_d$. Here and in what follows $\mu_s$ denotes the $s$-dimensional Lebesgue measure in $\R^d$, and $\#S$ the cardinality of a set $S$.

\begin{thm}\label{thm:non_BD_criterion}
	Let $\Lambda_1, \Lambda_2$ be two Delone sets in $\R^d$ 
	and suppose there is a sequence $(A_m)_{m\in\N}$ of sets $A_m\in \QQ_d^*$ for which
	\begin{equation}\label{eq:non_BD_condition}
	\lim_{m\to\infty} \frac{| \#(\Lambda_1 \cap A_m) - \#(\Lambda_2 \cap A_m) |}{\mu_{d-1}(\partial A_m)} = \infty.
	\end{equation}   
	Then there is no BD-map $\phi:\Lambda_1 \to \Lambda_2$.
\end{thm}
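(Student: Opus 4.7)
The plan is to argue by contradiction. Suppose $\phi:\Lambda_1\to\Lambda_2$ is a BD-map with displacement constant $C = \sup_{x\in\Lambda_1}\norm{x-\phi(x)}<\infty$. I will show that there exists a constant $K$, depending only on $C$, $\Lambda_1$, $\Lambda_2$, and $d$, such that
\begin{equation*}
\bigl|\#(\Lambda_1\cap A) - \#(\Lambda_2\cap A)\bigr| \le K\cdot \mu_{d-1}(\partial A) \qquad \text{for every } A\in\QQ_d^*.
\end{equation*}
Applied to the sequence $A_m$, this bound directly contradicts \eqref{eq:non_BD_condition}.

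First I would exploit the bijectivity of $\phi$ together with the displacement bound. Write $A^{+C} = \{y\in\R^d : \dist(y,A)\le C\}$ for the outer $C$-neighborhood of $A$. From $\norm{\phi(x)-x}\le C$ it follows that $\phi(\Lambda_1\cap A)\subseteq \Lambda_2\cap A^{+C}$ and $\phi^{-1}(\Lambda_2\cap A)\subseteq \Lambda_1\cap A^{+C}$. Taking cardinalities, using that $\phi$ is a bijection, and subtracting $\#(\Lambda_i\cap A)$ from the corresponding side of each inequality yields
\begin{equation*}
\bigl|\#(\Lambda_1\cap A) - \#(\Lambda_2\cap A)\bigr| \le \#(\Lambda_1\cap(A^{+C}\setminus A)) + \#(\Lambda_2\cap(A^{+C}\setminus A)).
\end{equation*}
The problem is thereby reduced to bounding the number of points of each $\Lambda_i$ lying in a thin shell around $\partial A$.

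Next, uniform discreteness of $\Lambda_i$ with separation parameter $r_i>0$ gives a packing argument: disjoint balls of radius $r_i/2$ centered at the points of $\Lambda_i$ inside $A^{+C}\setminus A$ fit inside the larger tubular neighborhood $T = \{y : \dist(y,\partial A)\le C + \tfrac{1}{2}\max(r_1,r_2)\}$, so $\#(\Lambda_i\cap(A^{+C}\setminus A))$ is dominated by a constant multiple of $\mu_d(T)$. The final geometric step is the linear bound $\mu_d(T)\le K'\cdot\mu_{d-1}(\partial A)$, where $K'$ depends only on $C$, $r_1$, $r_2$, and $d$. This is where the hypothesis $A\in\QQ_d^*$ plays an essential role: $\partial A$ is a finite union of exactly $\mu_{d-1}(\partial A)$ axis-parallel unit $(d-1)$-cubes (the exposed faces of the constituent unit cubes), and a fixed-width neighborhood of any one such face sits inside an axis-aligned box of volume depending only on $C$, $r_i$, and $d$. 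Subadditivity then produces the claimed estimate, and combining the three steps yields the required constant $K$.

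The main obstacle is the tubular volume bound in the last step: for generic Borel sets $A$ there is no linear control of the volume of a fixed-width tube around $\partial A$ by $\mu_{d-1}(\partial A)$, since the boundary can be irregular or twisted in ways that blow up the tube without increasing its surface measure. The combinatorial axis-aligned structure of elements of $\QQ_d^*$ is exactly what keeps the constant $K'$ uniform in $A$, and this is precisely why the theorem is phrased in terms of test sets in $\QQ_d^*$ rather than arbitrary measurable regions.
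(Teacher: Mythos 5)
Your proof is correct and follows essentially the same route as the paper's: the paper first derives $\#(\Lambda_1\cap A)\le\#(\Lambda_2\cap A^{+s})$ (and symmetrically) from Proposition \ref{prop:BD_criterion}, whose relevant direction is exactly your direct bijection-plus-displacement argument, and then bounds the excess $\#(\Lambda_i\cap(A^{+s}\setminus A))$ by a constant times $\mu_{d-1}(\partial A)$ via a counting estimate for the tube around $\partial A$ (Lemma \ref{lem:boundary_est.}, which imports the tube-volume bound $\mu_d((\partial U)^{+b})\le c(b)\,\mu_{d-1}(\partial U)$ for $U\in\QQ_d^*$ from Laczkovich). The only cosmetic differences are that you inline the easy direction of the Hall-type criterion rather than citing it, and you sketch the face-counting proof of the tube-volume bound --- correctly identifying that the axis-aligned cube structure of $\QQ_d^*$ is what makes the constant uniform --- instead of quoting it.
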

 

The conditions in  \cite[Theorem 1.2]{Solomon14} for Delone sets associated with substitution tilings depend on information that can be deduced from the substitution matrix alone. We are lead to ask, are Delone sets in $\R^d$ that correspond to substitution rules with the same substitution matrix necessarily BD equivalent? 

\begin{thm}\label{thm:period+nonperiodic_same_matrix}
There exists a set of prototiles $\mathcal{F}$ in $\R^2$ and primitive substitution rules $\varrho_1, \varrho_2$ on $\mathcal{F}$ with $M_{\varrho_1}=M_{\varrho_2}$, so that 
\begin{itemize}
\item[(i)]
Tilings defined by $\varrho_1$ are periodic. In particular, any associated Delone set is BD equivalent to a lattice. 
\item[(ii)]
Any Delone set corresponding to $\varrho_2$ is BD non-equivalent to any lattice. 
\end{itemize}
In particular, if $\Lambda_1$ corresponds to $\varrho_1$ and $\Lambda_2$ corresponds to $\varrho_2$, then $\Lambda_1\not\bd\Lambda_2$. 
\end{thm}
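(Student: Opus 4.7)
The plan is to construct a common prototile set $\mathcal{F}$ in $\R^2$ and two primitive substitution rules $\varrho_1,\varrho_2$ on $\mathcal{F}$ that use the same multiset of children for each parent prototile, so $M_{\varrho_1}=M_{\varrho_2}$, but that differ in the geometric placement of those children inside the parent supertile. Since the matrix is shared, the dichotomy between (i) and (ii) has to be forced by geometry, not spectrum. A natural starting point is to choose a matrix that lies outside the scope of \cite[Theorem 1.2]{Solomon14}; for instance, a $2\times 2$ primitive matrix whose non-Perron eigenvector has coordinates summing to zero, so that the index $t$ appearing in that theorem does not exist and the criterion gives no information in either direction.

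For part (i), the arrangement $\varrho_1$ should be chosen so that the supertile $\varrho_1(t)$ of each prototile $t$ tiles the plane periodically under a common sublattice of $\R^2$. A convenient way to engineer this is to insist that the children inside $\varrho_1(t)$ fit into an axis-parallel parallelogram of the same shape as the parent (up to inflation), so that iteration of $\varrho_1$ is compatible with a single fixed translation lattice at every level. Every tiling in the hull of $\varrho_1$ is then periodic, hence the associated Delone set $\Lambda_1$ has cocompact translation symmetries, and by \cite{DO1} it is BD equivalent to any lattice in $\R^2$.

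For part (ii), the substitution $\varrho_2$ uses the same tile counts but places them so that iteration produces supertiles in which one prototile type appears off-center in a systematic way. Theorem \ref{thm:non_BD_criterion} is then applied with $A_m\in\QQ_d^*$ taken to be a unit-cube approximation of the level-$m$ supertile of $\varrho_2$ (after a rescaling that places prototile vertices on $\Z^2$). A level-$m$ supertile has area of order $\lambda_1^m$ and hence perimeter of order $\lambda_1^{m/2}$ in $\R^2$, so the task reduces to proving
\begin{equation*}
\frac{|\#(\Lambda_2\cap A_m)-\#(L\cap A_m)|}{\mu_1(\partial A_m)}\longrightarrow \infty
\end{equation*}
for every lattice $L\subset\R^2$. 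The uneven placement in $\varrho_2$ must be designed so that, on supertiles, the point-count deviation from the (uniform) lattice density grows faster than $\lambda_1^{m/2}$; the fact that the subdominant eigenvector has coordinate sum zero is precisely what allows total tile counts across a full supertile to match lattice expectations while restrictions to sub-supertiles reveal the bias.

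The main obstacle is the explicit design of $\varrho_2$ together with the matching discrepancy estimate: the construction must create a cube-visible imbalance that is not averaged out by any lattice, and that estimate must dominate the perimeter while the substitution matrix itself does not force non-equivalence. Once (i) and (ii) are established, the closing ``in particular'' statement is immediate from the transitivity of $\bd$ (which follows directly from the scaling-tolerant definition in Section \ref{sec:introduction}): if one had $\Lambda_1\bd\Lambda_2$, then composing with the equivalence $\Lambda_1\bd L$ from (i) would yield $\Lambda_2\bd L$, contradicting (ii).
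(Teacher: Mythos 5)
Your high-level strategy matches the paper's: same tile counts, different geometric placement, periodicity for $\varrho_1$, and an application of Theorem \ref{thm:non_BD_criterion} (plus the density reduction to a single scaling of $\Z^2$) for $\varrho_2$. But the proposal stops exactly where the proof has to begin, and the one concrete design decision you do make undercuts your own plan. You propose a matrix whose subdominant eigenvector has zero coordinate sum and then take $A_m$ to be (a cube approximation of) the full level-$m$ supertile. If $\inpro{\mathbf{1}}{v_2}=0$, then since $\inpro{u_1}{v_2}=0$ automatically, the tile count of a full supertile equals $\alpha$ times its volume \emph{exactly}, so the numerator $\absolute{\#(\Lambda_2\cap A_m)-\#(L\cap A_m)}$ is $O(\mu_1(\partial A_m))$ and the ratio in \eqref{eq:non_BD_condition} cannot diverge. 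The paper does the opposite: it uses $M=\begin{pmatrix}6&9\\1&6\end{pmatrix}$ with $v_2=(-3,1)^{T}$, whose coordinate sum is $-2\neq 0$, and escapes \cite[Theorem 1.2]{Solomon14} via the boundary equality $\absolute{\lambda_2}=3=\lambda_1^{1/2}$, not via a vanishing coordinate sum.

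The missing idea is the resonance/aggregation mechanism that makes the discrepancy beat the perimeter. Even granting a nonzero coordinate sum, a single sub-supertile of generation $j$ carries discrepancy of order $\absolute{\lambda_2}^{j}=3^{j}$ against a perimeter of order $3^{j}$, so no single region (full supertile or sub-supertile) gives a divergent ratio. The paper's $A_m$ is the staircase patch $P_m$ under the diagonal of $\varrho_2^m(R)$, decomposed into $3^{j}$ sub-supertiles of generation $m-j+1$ for each $j=1,\dots,m$; each layer contributes a coherent discrepancy of order $3^{j}\cdot 3^{m-j}=3^{m}$, and the $m$ layers add up to $m\cdot 3^{m}$ while $\mu_1(\partial A_m)\le 8\cdot 3^m$ (Lemma \ref{lem:P_M_counting}). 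The substitution $\varrho_2$ is designed precisely so that such staircase patches occur in every tiling of $\XX_{\varrho_2}$, and the signs of the per-layer discrepancies do not cancel. Your proposal names this as ``the main obstacle'' but offers neither the construction nor the estimate, so as written it is a plan rather than a proof; and with the zero-coordinate-sum matrix you suggest, the per-layer discrepancies of full sub-supertiles would all vanish, so even the staircase trick would have nothing to amplify.
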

Observe that in view of  \cite[Theorem 1.2]{Solomon14} stated above, the equality $\absolute{\lambda_t}=\lambda_1^{(d-1)/d}$ must hold for the substitution rules $\varrho_1$ and $\varrho_2$.\\

The next result concerns mixed substitution tilings, see \S \ref{sec:preliminaries} for precise definitions. Consider a mixed substitution system $\Sigma=(\sigma_1,\ldots,\sigma_k)$ of $k$ primitive substitution rules on a single set of prototiles $\FF$ in $\R^d$. Let $w\in\Omega=\{1,\ldots,k\}^\N$ be an infinite word. Define the family of patches of tiles 
\begin{equation}
\mathcal{P}_w \df \left\{\sigma_{w_1} (\sigma_{w_2} ( \cdots \sigma_{w_m}(T) \cdots)) \mid T\in\FF, m\in\N \right\}.\notag
\end{equation}
Denote  by $\XX_{\Sigma,w}$ the space of tilings $\TT$ with the property that every patch $P \subset \TT$ is a translate of a subset of an element of $\mathcal{P}_w$, and let $\XX_{\Sigma,\Omega}=\bigcup_{w\in\Omega} \XX_{\Sigma,w}$.

\begin{thm}\label{thm:continuously_many_things}
There exists a set of prototiles $\FF$ in $\R^2$ and a mixed substitution system $\Sigma=(\sigma_1,\sigma_2)$ with $M_{\sigma_1}=M_{\sigma_2}$, so that  $\XX_{\Sigma,\Omega}$ contains continuously many tilings with pairwise BD non-equivalent associated Delone sets. Moreover every Delone set associated with a tiling in $\XX_{\Sigma,\Omega}$ is BL equivalent to a lattice. In particular, all of these distinct BD class representatives belong to the same BL class. 
\end{thm}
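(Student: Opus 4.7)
The plan is to leverage the construction of Theorem~\ref{thm:period+nonperiodic_same_matrix} directly, by taking the same prototile set $\FF$ and setting $(\sigma_1,\sigma_2) \df (\varrho_1,\varrho_2)$. Because $M_{\varrho_1}=M_{\varrho_2}=M$, the substitution matrix of the mixed system is unambiguous, and the remark following Theorem~\ref{thm:period+nonperiodic_same_matrix} guarantees that the critical balance $|\lambda_t|=\lambda_1^{1/2}$ holds in $d=2$; this is precisely the regime in which boundary lengths and discrepancies scale at the same rate, so that the outcome of Theorem~\ref{thm:non_BD_criterion} becomes sensitive to the word $w$ rather than to the matrix alone.

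The first step is a discrepancy recursion along the word. Fix a prototile $T_0\in\FF$, a global density $\alpha$, and, for each $w\in\Omega$ and each $m$, let $A_m^w \df \sigma_{w_1}\circ\cdots\circ\sigma_{w_m}(T_0)$, so that $\mu_2(A_m^w)\asymp\lambda_1^m$ and $\mu_1(\partial A_m^w)\asymp\lambda_1^{m/2}$. I would study the signed counting defect $D_m^w \df \#(\Lambda_w\cap A_m^w)-\alpha\mu_2(A_m^w)$ and show, using the structural properties established in the proof of Theorem~\ref{thm:period+nonperiodic_same_matrix}, that applying $\sigma_1$ contributes only a boundary error (because $\varrho_1$-tilings are periodic and hence have vanishing bulk discrepancy), while each application of $\sigma_2$ at level $j$ contributes a deterministic term $c\cdot\lambda_1^{j/2}$ whose sign and magnitude are dictated by the eigenvector realizing $\lambda_t$. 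Iterating yields a formula for $D_m^w$ as a signed sum over the levels at which $w$ chooses letter $2$.

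The second step is to exhibit continuously many pairwise BD non-equivalent Delone sets. I would parametrize a Cantor-type family of words: to each $\eta\in\{0,1\}^\N$ associate $w(\eta)\in\Omega$ built by concatenating blocks $B_k$ of rapidly increasing length, whose $\sigma_2$-frequency encodes $\eta_k$ at widely separated scales $m_k$. For $\eta\ne\eta'$ differing first at coordinate $k$, the discrepancy formula from Step~1 gives
\[
\bigl|\#(\Lambda_{w(\eta)}\cap A_{m_k}) - \#(\Lambda_{w(\eta')}\cap A_{m_k})\bigr| \gg \lambda_1^{m_k/2}\cdot h(k),
\]
with $h(k)\to\infty$ chosen by tuning block lengths. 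Applied along $m=m_k$, Theorem~\ref{thm:non_BD_criterion} then forces $\Lambda_{w(\eta)}\not\bd\Lambda_{w(\eta')}$. A diagonal extraction over the uncountable family $\{w(\eta)\}_{\eta}$ produces continuum many distinct BD classes inside $\XX_{\Sigma,\Omega}$.

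For the final claim, BL equivalence to a lattice for every tiling in $\XX_{\Sigma,\Omega}$ follows because the common matrix $M$ forces uniform tile frequencies at every level and because all level-$m$ supertiles under either rule are built from finitely many admissible patches. One constructs an explicit bi-Lipschitz matching between each $\varrho_2$-supertile and the corresponding $\varrho_1$-supertile (which is itself BL, in fact BD, to a lattice patch), with distortion bounded uniformly in $m$, along the lines of the BL constructions of Aliste-Prieto--Coronel--Gambaudo and the third author, adapted to the mixed setting. The main obstacle is Step~2: since the critical scaling puts discrepancy and boundary at exactly the same order $\lambda_1^{m/2}$, producing an \emph{unbounded} ratio in \eqref{eq:non_BD_condition} requires genuine accumulation across many levels, and the block lengths defining $w(\eta)$ must be tuned carefully so that cancellations between $\sigma_1$- and $\sigma_2$-contributions at lower scales do not mask the separating contribution at level $m_k$.
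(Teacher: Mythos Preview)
Your Step~1 contains a fatal gap. You take $A_m^w = \sigma_{w_1}\circ\cdots\circ\sigma_{w_m}(T_0)$, a full level-$m$ supertile. But since $M_{\sigma_1}=M_{\sigma_2}=M$, the number of tiles in $A_m^w$ equals the sum of coordinates of $M^m e_{T_0}$, which is \emph{independent of the word $w$}; likewise $\mu_2(A_m^w)=\xi^{2m}\mu_2(T_0)$ does not depend on $w$. Hence your defect $D_m^w$ is identical for every word of length $m$, and full supertiles cannot separate distinct tilings via Theorem~\ref{thm:non_BD_criterion}. In particular the assertion that ``each application of $\sigma_2$ at level $j$ contributes a deterministic term $c\cdot\lambda_1^{j/2}$'' is false for this choice of $A_m$: on complete supertiles the two rules contribute identically, precisely because they share the matrix. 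This is not a technicality to be fixed by tuning block lengths in Step~2; without word-dependence in the counts there is nothing to accumulate.

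The paper circumvents this by taking $A_m$ to be not a supertile but the \emph{staircase region} below the main diagonal of the level-$(m{+}1)$ rectangle, and by using new rules $\sigma_1,\sigma_2$ (with $\sigma_2=\varrho_2$ but $\sigma_1\neq\varrho_1$) engineered so that this staircase decomposes into generation-$j$ sub-supertiles whose \emph{shapes} depend on $w_j$: nine squares $S_j$ if $w_j=1$, three rectangles $R_j$ if $w_j=2$. Since $\binom{1}{0}$ and $\binom{0}{1}$ have opposite $v_2$-components, the tile count in the fixed region $A_m$ becomes $9^m-3^m\bigl(1+\sum_{j=1}^m \pm 1\bigr)$ with the sign at level $j$ determined by $w_j$. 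This is where word-dependence enters. The continuum of classes is then parametrized directly by a real $\gamma\in[-1,1]$ via the running average of the $\pm 1$ sequence (Lemma~\ref{lem2:finite_words_approx._gamma}), with no Cantor coding needed: for $\gamma_1\neq\gamma_2$ the numerator in \eqref{eq:non_BD_condition} grows like $m\cdot 3^m$ while $\mu_1(\partial A_m)\le 8\cdot 3^m$, giving divergence. The BL claim is dispatched by the standard counting argument of \cite{Solomon11}, since all tile frequencies are governed by the single matrix $M$.
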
 

\noindent{\bf Remark.} A similar result holds for right-mixed substitution tilings, see \S\ref{sec:right-left}. In addition, consider the space $\XX_{\text{rand}(\Sigma)}$ of tilings $\TT$ with the property that every finite patch $P \subset \TT$ is a translate of a subset of a patch which occurs in some $\Sigma$-iterate of some $T \in \FF$, where now the
$\sigma_i$'s are applied at random individually on each tile (see
\cite[Definitions 9 \& 12]{GRS} for a formal definition). Clearly $\XX_{\Sigma,w} \subset \XX_{\text{rand}(\Sigma)}$
for every $w\in\Omega$, and so the result holds also for $\XX_{\text{rand}(\sigma_1, \sigma_2)}$.\\

The arguments developed for the proof of Theorem \ref{thm:continuously_many_things} allow us to deduce the following result concerning standard non-mixed substitution tilings. 
\begin{thm}\label{thm:arbitrary-many-things}
	There exists a set of prototiles $\mathcal{F}$ in $\R^2$ so that for every $q \in \N$ there exist $q$ primitive substitution rules $\varrho_1, \ldots, \varrho_q$ on $\mathcal{F}$, all with the same substitution matrix $M_{\varrho}$, and $q$ tilings $\TT_1,\ldots,\TT_q$ defined by these rules, so that the associated Delone sets are pairwise BD non-equivalent.
\end{thm}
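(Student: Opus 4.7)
The plan is to realize each of the $q$ desired substitution rules as an $n$-fold composition of the two substitution rules $\sigma_1,\sigma_2$ from Theorem~\ref{thm:continuously_many_things}, and then to reduce the BD-non-equivalence statement to (the proof of) that theorem.

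First I would take the prototile set $\FF$ and the two primitive substitution rules $\sigma_1,\sigma_2$ with common matrix $M=M_{\sigma_1}=M_{\sigma_2}$ furnished by Theorem~\ref{thm:continuously_many_things}. For a finite word $u=u_1\cdots u_n\in\{1,2\}^n$, set
\[
\varrho_u \df \sigma_{u_1}\circ\sigma_{u_2}\circ\cdots\circ\sigma_{u_n}.
\]
Each $\varrho_u$ is itself a substitution rule on $\FF$, with inflation factor equal to the $n$th power of the common inflation of $\sigma_1,\sigma_2$, and substitution matrix $M_{\varrho_u}=M^n$ that does not depend on $u$. Primitivity of $\varrho_u$ follows from primitivity of the $\sigma_i$: if $M^K$ is strictly positive then so is $(M^n)^K=M^{nK}$. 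Thus these compositions supply many primitive substitution rules on the common prototile set $\FF$ all sharing the matrix $M^n$.

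Given $q\in\N$, I would pick $n$ large and select $q$ words $u^{(1)},\ldots,u^{(q)}\in\{1,2\}^n$ lying in pairwise distinct cyclic-rotation classes, so that the corresponding purely periodic infinite words $w^{(j)}\df u^{(j)}u^{(j)}u^{(j)}\cdots\in\Omega$ are pairwise distinct in the sense relevant to Theorem~\ref{thm:continuously_many_things} (for instance, so that the frequency of the symbol $1$ in $u^{(j)}$, or whichever block statistic the proof of that theorem isolates, takes $q$ distinct values on the $u^{(j)}$'s). For each $j$, let $\TT_j$ be any self-similar tiling fixed (up to translation) by some power of $\varrho_{u^{(j)}}$; it exists by primitivity, and its supertiles at level $kn$ are precisely of the form $\sigma_{w^{(j)}_1}\sigma_{w^{(j)}_2}\cdots\sigma_{w^{(j)}_{kn}}(T)$, so $\TT_j\in\XX_{\Sigma,w^{(j)}}$. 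Let $\Lambda_j$ be the Delone set obtained by placing one point in each tile of $\TT_j$.

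To conclude pairwise BD-non-equivalence of the $\Lambda_j$'s, I would apply Theorem~\ref{thm:non_BD_criterion} using the discrepancy sequences $(A_m)\subset\QQ_d^*$ produced by the proof of Theorem~\ref{thm:continuously_many_things}: for each pair $j\neq k$, the fact that $w^{(j)}\neq w^{(k)}$ yields, via the mechanism of that proof applied to the two elements $\TT_j\in\XX_{\Sigma,w^{(j)}}$ and $\TT_k\in\XX_{\Sigma,w^{(k)}}$, a sequence of lattice-cube unions along which the quotient in \eqref{eq:non_BD_condition} diverges, whence $\Lambda_j\not\bd\Lambda_k$. The hard part will be to check that the discrepancy argument of Theorem~\ref{thm:continuously_many_things}, which is engineered to distinguish a continuum of infinite words, still separates my finitely many purely periodic ones; this should reduce to a finite combinatorial condition on the block statistics of the $u^{(j)}$'s, which can always be satisfied by taking $n$ sufficiently large and picking the $u^{(j)}$'s so that the relevant statistic attains $q$ distinct values.
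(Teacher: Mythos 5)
Your proposal is correct and is essentially the paper's own argument: the paper (Lemma \ref{lem2:n+1-deterministic-subst}) takes the compositions $\varrho_p=\sigma_1^p\sigma_2^{q-p}$, i.e.\ periodic infinite words $w^p=(1^p2^{q-p})^\infty$ whose symbol frequencies give $q+1$ distinct values $\gamma_p=\frac{2p-q}{q}$, and reruns the discrepancy estimate of Theorem \ref{thm:continuously_many_things} with the approximation error $1/m$ relaxed to $2q/m$. Your only loose phrasing is that distinct cyclic-rotation classes alone would not suffice (two such words can share the same $1$-frequency), but you immediately impose the correct condition --- $q$ distinct frequencies of the symbol $1$ --- which is exactly what the paper's explicit choice achieves.
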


A final curious consequence of our arguments is the demonstration of a Delone set $\Lambda$ associated with a substitution tiling of $\R^2$, with the property that $\Lambda$ is BD non-equivalent to the set $-\Lambda=\{-x \mid x \in \Lambda \}$, see Corollary \ref{cor:rotation_by_pi_not_BD}.

\subsection*{Acknowledgments} 
We are happy to thank Dan Rust and Barak Weiss for helpful discussions, and the two anonymous reviewers for their thoughtful comments. We thank The Center For Advanced Studies In Mathematics in Ben-Gurion University and the Research Centre of Mathematical Modelling
(RCM$^2$) at Bielefeld University for supporting the visit of the first author in Israel. The second author is grateful for the support of the David and Rosa Orzen Endowment Fund via an Orzen Fellowship, and the ISF grant No. 1570/17.


\section{Preliminaries and notations}\label{sec:preliminaries}
	 
In the general setting of tiling theory, a \emph{tile} $T\subset\R^d$ refers to a set with some regularity assumptions on it, such as being equal to the closure of its interior or being homeomorphic to a closed $d$-dimensional ball. In the constructions described in this work, all tiles are either squares or rectangles in the plane, and so all standard regularity assumptions hold.   

A \emph{tessellation} of a set $U\subset\R^d$ is a collection of tiles with pairwise disjoint interiors such that their union is equal to $U$. A tessellation $P$ of a bounded set $U\subset\R^d$ is called a \emph{patch}, and the set $U$ is called the \emph{support of $P$} and denoted by $\supp(P)$. A tessellation of $\R^d$  is called a \emph{tiling} and denoted by $\TT$.  A Delone set $\Lambda$ is said to be  \emph{associated} with a tiling $\TT$ if $\Lambda$ can be obtained from $\TT$ by placing a point in each tile, so that every tile is represented by a distinct point in $\Lambda$.

Two tiles are \emph{translation equivalent} if they differ by a translation, and we denote by $\FF$ the set of representatives of translation equivalence classes. We assume throughout that $\mathcal{F}=\{T_1,\ldots,T_n\}$ is a finite set. Elements of $\FF$ are called \emph{prototiles}, and if a tile $T$ is equivalent to a prototile $T_i\in\FF$ it is also called a tile \emph{of type} $i$. The translation equivalence relation is extended to patches of tiles from $\FF$, and the corresponding set of representatives is denoted by  $\FF^*$.


\subsection{Substitution Tilings}\label{subsec:substitution_tilings}
Let $\{T_1,\ldots,T_n\}$ be a set of tiles in $\R^d$. We construct patches and tilings with prototiles $\FF=\{T_1,\ldots,T_n\}$. 

\begin{definition}\label{def:SubRule}
	A \emph{substitution rule} on $\FF$ with \emph{inflation factor}  $\xi>1$ is a map $\varrho:\mathcal{F}\to\mathcal{F}^*$ satisfying $\xi T_i=\supp(\varrho(T_i))$ for every $i$. Namely, it is a fixed set of tessellations of the sets $\{\xi T_1,\ldots,\xi T_n\}$ by tiles from $\FF$. 
	The domain of the map $\varrho$ is extended to $\FF^*$ by applying $\varrho$ individually to each tile, and similarly to tilings for which all patches belong to $\FF^*$. 
\end{definition}

\begin{definition}\label{def:SubsTiling}
	Let $\varrho$ be a substitution rule on $\mathcal{F}$, and consider the patches 
	\[\PP_{\varrho}\df\left\{\varrho^m(T) \mid m\in\N \: , \: T\in\mathcal{F} \right\}. \] 
	The \emph{substitution tiling space} $\XX_{\varrho}$ is the collection of all tilings $\TT$ of $\R^d$ with the property that for every patch $P\subset\TT$, there is a patch $P'\in\PP_{\varrho}$ that contains a translated copy of $P$ as a sub-patch. 
	Elements of $\XX_{\varrho}$ are called \emph{substitution tilings} defined by $\varrho$. 
\end{definition}
\begin{definition}\label{def:SubMatrix+Primitive}
	Let $\varrho$ be a substitution rule on $\mathcal{F}$. The entries $a_{ij}$ of the  \emph{substitution matrix} $M_{\varrho}\in M_n(\Z)$ associated with $\varrho$ are given by 
	\[a_{ij} = \#\left\{\text{Tiles of type } i \text{ in } \varrho(T_j) \right\}.\] 
\end{definition}   

We say that $\varrho$ is \emph{primitive} if $M_{\varrho}$ is a primitive matrix, that is, if there exists a power of $M_{\varrho}$ with strictly positive entries. Clearly $\xi^d$ is an eigenvalue of $M_\varrho$, and since the vector $u_1\in\R^n$ whose $i$'th entry is the volume of $T_i$ is a corresponding positive left eigenvector, primitivity and the Perron-Frobenius Theorem implies that the eigenvalues of $M_\varrho$ can be ordered so that $\lambda_1 = \xi^d>\absolute{\lambda_2} \ge\ldots\ge\absolute{\lambda_n}$. 

\begin{definition}
The \emph{natural density} of a Delone set $\Lambda\subset\R^d$ is defined to be
\begin{equation}\label{eq:natural_density}
\lim_{r\to\infty}\frac{\#(\Lambda \cap B_r(0))}{\mu_d\left(B_r(0)\right)},\notag
\end{equation}
provided the limit exists, where $B_r(x)$ is the open ball of radius $r$ about $x\in\R^d$.
\end{definition}

Given a primitive substitution rule $\varrho$, the Perron-Frobenius Theorem also implies that $M_\varrho$ has a positive right eigenvector $v_1\in\R^n$ associated with its leading eigenvalue $\xi^d$, and a direct computation shows that the natural density of any Delone set corresponding to $\varrho$ exists and is equal to  
\begin{equation}\label{eq:alpha_def}
\alpha = \frac{\inpro{\mathbf{1}}{v_1}}{\inpro{u_1}{v_1}},
\end{equation}
where $\inpro{\cdot}{\cdot}$ is the standard inner product in $\R^n$, see also \cite{BaakeGrimm} or \cite[\S 2]{Solomon14}.  

\subsection{Mixed Substitution Tilings}

Fix a set $\FF$ of $n$  prototiles in $\R^d$ and a \emph{mixed substitution system} $\Sigma = (\sigma_1,\ldots,\sigma_k)$ of $k$ primitive substitution rules on $\FF$. Denote $\AA = \{1,\ldots,k\}$. Then $\Omega=\AA^\N$ is the space of \emph{infinite words} and $\AA^*=\{ \AA^m \mid
m \in \N\}$ is the set of \emph{finite words} over $\AA$. Given $w=(w_1,w_2\ldots)\in\Omega$, the word $(w_1,\ldots,w_m)\in\AA^m$, denoted by $w(m)$, is the length $m$ prefix of $w$.

The semi-group $(\AA^*, \text{concatenation})$ acts on $\FF^*$ with respect to the mixed substitution system $\Sigma$ by
\begin{equation}\label{eq:action_of_a_finite_word}
\act{a}.P \df \sigma_{a_1} (\sigma_{a_2} ( \cdots \sigma_{a_m}(P) \cdots)),
\end{equation}
where $a=(a_1,\ldots,a_m)\in \AA^m$ is a finite word and $P$ is a patch in $\FF^*$. This can be viewed as an action from the left, see \S\ref{sec:right-left} for further discussions.

A  system $\Sigma$ is \textit{uniformly primitive} if there exists $m_0\in\N$ so that for every $a\in\AA^{m_0}$ and every $T\in\FF$ the patch $\act{a}.T$ contains tiles of all types. Clearly in such a case the same holds for any $m\ge m_0$. 

\begin{definition}
Fix a word $w \in \Omega$ and consider the family of patches 
\begin{equation}\label{eq:PP_w}
\mathcal{P}_w \df \left\{ \act{w(m)}.T \mid T\in\FF, m\in\N \right\}.\notag
\end{equation}
The \emph{mixed substitution tiling space} $\XX_{\Sigma,w}$ is the collection of tilings $\TT$ of $\R^d$ with the property that for every patch $P\subset\TT$ there is a patch $P'\in\PP_w$ that contains a translated copy of $P$ as a sub-patch. 
Elements of $\XX_{\Sigma,w}$ are called \emph{mixed substitution tilings} defined by $w$ and $\Sigma$. The space of all tilings defined by $\Sigma$ is denoted by $\XX_{\Sigma,\Omega}=\bigcup_{w\in\Omega} \XX_{\Sigma,w}$.

\end{definition}


\section{BD equivalent Delone sets and Hall's marriage theorem}\label{sec:Hall}

Recall that $\QQ_d^*$ is the collection of unions of finite subsets of $\QQ_d$, the set of all axis-parallel unit cubes in $\R^d$ centered at points of $\Z^d$. For a set $A\subset \R^d$ and $s>0$ we denote 
\[A^{+s} = \left\{x\in \R^d \mid \exists a\in A \text{ such that } \norm{x-a}\le s \right\}.\]

\begin{prop}\label{prop:BD_criterion}
	Let $\Lambda_1, \Lambda_2$ be Delone sets in $\R^d$. Then there  exists a BD-map $\phi:\Lambda_1 \to \Lambda_2$ if and only if there exists $s>0$ such that for every $U\in\QQ_d^*$   
	\begin{align}\label{eq:Hall's_condition_with cubes}
	 \#(\Lambda_1 \cap U) \le \#(\Lambda_2 \cap U^{+s}) \quad
	 \mbox{ and }  \quad
	 \#(\Lambda_2 \cap U) \le \#(\Lambda_1 \cap  U^{+s})
	\end{align}
\end{prop}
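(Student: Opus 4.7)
My plan is to prove the two implications separately; the nontrivial direction will be obtained by setting up a bipartite graph on $\Lambda_1 \sqcup \Lambda_2$ and invoking an infinite version of Hall's marriage theorem.

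For the easy direction, suppose there is a BD-map $\phi \colon \Lambda_1 \to \Lambda_2$ with $c = \sup_{x \in \Lambda_1} \|x - \phi(x)\| < \infty$. Fix any $U \in \QQ_d^*$. Then $\phi$ restricts to an injection $\Lambda_1 \cap U \hookrightarrow \Lambda_2 \cap U^{+c}$, giving $\#(\Lambda_1 \cap U) \le \#(\Lambda_2 \cap U^{+c})$. Since $\phi$ is a bijection, $\phi^{-1}$ is a BD-map with the same displacement bound $c$, so the same argument applied to $\phi^{-1}$ yields $\#(\Lambda_2 \cap U) \le \#(\Lambda_1 \cap U^{+c})$. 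Thus \eqref{eq:Hall's_condition_with cubes} holds with $s = c$.

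For the converse, assume \eqref{eq:Hall's_condition_with cubes} holds for some $s > 0$, and set $s' = s + \sqrt{d}$. Define a bipartite graph $G$ on the vertex set $\Lambda_1 \sqcup \Lambda_2$ where $x \in \Lambda_1$ is joined to $y \in \Lambda_2$ iff $\|x - y\| \le s'$. I claim Hall's condition holds on both sides. Given any finite $F \subset \Lambda_1$, let $U_F \in \QQ_d^*$ be the union of the (finitely many) cubes in $\QQ_d$ that contain points of $F$. Then $F \subset U_F$, and since each cube in $\QQ_d$ has diameter $\sqrt{d}$, every point of $U_F^{+s}$ lies within distance $s'$ of some point of $F$. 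Hence the neighborhood $N_G(F)$ satisfies
\[
\#N_G(F) \ge \#(\Lambda_2 \cap U_F^{+s}) \ge \#(\Lambda_1 \cap U_F) \ge \#F,
\]
using the hypothesis in the middle inequality. The symmetric argument, swapping the roles of $\Lambda_1$ and $\Lambda_2$ and using the other half of \eqref{eq:Hall's_condition_with cubes}, gives Hall's condition for finite subsets of $\Lambda_2$ as well.

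The graph $G$ is locally finite: uniform discreteness of both Delone sets bounds the degree of each vertex by a constant depending only on $s'$ and the Delone parameters. A standard compactness/König's lemma argument then produces a perfect matching in $G$: exhaust $\R^d$ by an increasing sequence of balls, use finite Hall's theorem on each truncation (the truncated graph inherits Hall's condition from a slight enlargement), and extract a convergent subsequence of partial matchings using local finiteness. The resulting bijection $\phi \colon \Lambda_1 \to \Lambda_2$ uses only edges of $G$, so $\sup_x \|x - \phi(x)\| \le s'$, i.e.\ $\phi$ is a BD-map.

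The main obstacle is the step from finite to infinite Hall's theorem, and in particular arranging that the matching simultaneously covers both Delone sets. This is precisely why both inequalities in \eqref{eq:Hall's_condition_with cubes} are needed: together they yield Hall's condition on both sides of the bipartite graph, which is the correct input for the two-sided infinite version of the marriage theorem (or, equivalently, the simultaneous compactness argument sketched above).
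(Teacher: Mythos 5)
Your proof is correct and follows essentially the same route as the paper's: the easy direction via the displacement bound, and the converse by verifying Hall's condition on both sides of the bipartite graph $G_{s+\sqrt d}$ using the union of unit cubes meeting a finite set, then invoking a two-sided infinite marriage theorem. The only (minor) difference is that where you sketch a compactness/K\"onig argument for the infinite Hall step --- the loosest part of your write-up, since the naive ball truncation does not literally inherit Hall's condition --- the paper simply cites Rado's theorem on factorization of even graphs, which is the cleaner way to discharge that step.
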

 
\begin{proof}
	Fix $m>0$, and consider the bipartite graph
	\[ G_m = \big(V_1\dotcup V_2,E_m\big) = \big( \Lambda_1 \dotcup \Lambda_2, \big\{ \{x,y\}
	\mid x \in \Lambda_1, y \in \Lambda_2, \norm{x-y} \le m \big\} \big).\]
	Given a finite subset $F \subset \Lambda_1$ denote its neighborhood in $G_m$ by
	\[N_{G_m}(F)\df\left\{ y\in\Lambda_2 \mid 	\exists x \in F: \; \{x,y\} \in E_m \right\}, \]
	and similarly for finite subsets of $\Lambda_2$. Observe that the existence of a perfect matching in $G_m$ for some $m$ is equivalent to the existence of a BD-map from $\Lambda_1$ to $\Lambda_2$. By an infinite version of Hall's Marriage Theorem due to Rado \cite{Rado}, the graph $G_m$ contains a perfect matching if and only if for all finite subsets $F_1 \subset \Lambda_1$ and $F_2 \subset \Lambda_2$ the inequalities
	\begin{equation}\label{eq:Hall's_condition}
		\#F_1 \le \#N_{G_m}(F_1)  \text{ and } \#F_2 \le \#N_{G_m}(F_2)
	\end{equation}
	hold. It is thus left to show that there exists $s>0$ for which \eqref{eq:Hall's_condition_with cubes} holds for any $U\in\QQ_d^*$ if and only if there exists $m>0$ for which \eqref{eq:Hall's_condition} holds for all finite
	subsets $F_1 \subset \Lambda_1$ and $F_2 \subset \Lambda_2$.
	
	First, assume $s>0$ is such that \eqref{eq:Hall's_condition_with cubes} holds for every $U\in\QQ_d^*$. Given finite sets $F_1 \subset \Lambda_1$ and $F_2 \subset \Lambda_2$ define the sets $U_1, U_2\in\QQ_d^*$ by 
	\begin{equation}\label{eq:U_1,U_2}
	U_i = \bigcup \left\{C(x) \mid F_i\cap C(x)\neq\varnothing, x\in\Z^d \right\}, \quad i=1,2.
	\end{equation}
	The diameter of a cube in $\QQ_d$ is $\sqrt{d}$, and so $U_i\subset F_i^{+\sqrt{d}}$. Then for $m = s+\sqrt{d}$ 
	\begin{equation}\label{eq:s_to_m}
	U_i^{+s} 
	\subset 
	\{x\in\R^d \mid  \exists a\in F_i^{+\sqrt{d}} \text{ so that } \norm{a-x}\le s \} \subset F_i^{+m}.
	\end{equation}
	By \eqref{eq:Hall's_condition_with cubes}, and in view of \eqref{eq:U_1,U_2} and \eqref{eq:s_to_m}, we have 
	\begin{equation}
	\#F_1  =
	 \#(\Lambda_1\cap U_1) \le 
	\#(\Lambda_2\cap U_1^{+s}) \le 
	 \#(\Lambda_2\cap F_1^{+m})
	\le  \#N_{G_m}(F_1),\notag
	\end{equation}
	and similarly $\#F_2 \le \#N_{G_m}(F_2)$, as required.
	
	Conversely, let $m>0$ be so that \eqref{eq:Hall's_condition} holds for all finite subsets $F_i \subset \Lambda_i$. Given $U\in\QQ_d^*$, set $F_i=\Lambda_i \cap U$. Clearly $\#N_{G_m}(F_i)\subset U^{+m}$, and so by \eqref{eq:Hall's_condition}, condition \eqref{eq:Hall's_condition_with cubes} holds with $s=m$.  
\end{proof}

Proposition \ref{prop:BD_criterion} implies the following simple but useful result.

\begin{cor}\label{cor:different_densities_implies_not_BD}
	Let $\Lambda_1$ and $\Lambda_2$ be Delone sets in $\R^d$ with natural density $\alpha_1$ and $\alpha_2$ respectively, where $\alpha_1\neq\alpha_2$. Then there is no BD-map $\phi:\Lambda_1 \to \Lambda_2$. 	
\end{cor}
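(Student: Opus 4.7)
The plan is to contradict Proposition \ref{prop:BD_criterion}. Without loss of generality assume $\alpha_1 > \alpha_2$, and set $\delta = \alpha_1 - \alpha_2 > 0$. If a BD-map $\phi : \Lambda_1 \to \Lambda_2$ existed, the proposition would supply some $s > 0$ with $\#(\Lambda_1 \cap U) \le \#(\Lambda_2 \cap U^{+s})$ for every $U \in \QQ_d^*$. I intend to produce a sequence $(U_n)$ in $\QQ_d^*$ violating this inequality for all large $n$.

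The natural choice is to take the cube-approximation of a large Euclidean ball: for each $n \in \N$ set $U_n = \bigcup \{ C(x) \mid x \in \Z^d \cap B_n(0) \}$. Elementary inclusions yield $B_{n - \sqrt{d}/2}(0) \subseteq U_n \subseteq U_n^{+s} \subseteq B_{n + s + \sqrt{d}/2}(0)$, so the Lebesgue measures of $U_n$, $U_n^{+s}$ and $B_n(0)$ are all asymptotic as $n \to \infty$. Combining these containments with monotonicity of the counting function and the hypothesis that $\Lambda_i$ has natural density $\alpha_i$, I would conclude that $\#(\Lambda_1 \cap U_n)/\mu_d(B_n(0)) \to \alpha_1$ and $\#(\Lambda_2 \cap U_n^{+s})/\mu_d(B_n(0)) \to \alpha_2$.

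Because $\alpha_1 > \alpha_2$, for $n$ sufficiently large the first count strictly exceeds the second, contradicting \eqref{eq:Hall's_condition_with cubes} and hence the existence of a BD-map. The symmetric inequality in \eqref{eq:Hall's_condition_with cubes} is irrelevant for the argument; one direction of failure is already enough to rule out a BD-map.

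The only delicate step is the transfer from the ball-based definition of natural density to the asymptotic count on $U_n$ and on $U_n^{+s}$. This is what the sandwiching above accomplishes: the shells $B_{n + s + \sqrt{d}/2}(0) \setminus B_n(0)$ and $B_n(0) \setminus B_{n - \sqrt{d}/2}(0)$ have volume $o(\mu_d(B_n(0)))$ in $\R^d$, so the boundary discrepancy cannot absorb the linear-in-volume gap $\delta \cdot \mu_d(B_n(0))$ between the two counts.
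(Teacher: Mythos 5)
Your argument is correct and is exactly the route the paper intends: the paper states the corollary as an immediate consequence of Proposition \ref{prop:BD_criterion} without writing out a proof, and your cube-approximation of large origin-centered balls, together with the sandwich $B_{n-\sqrt{d}/2}(0)\subseteq U_n\subseteq U_n^{+s}\subseteq B_{n+s+\sqrt{d}/2}(0)$, is the natural way to fill in that omitted step. The only point worth noting is that you only need the one-sided bounds $\#(\Lambda_1\cap U_n)\ge \#(\Lambda_1\cap B_{n-\sqrt{d}/2}(0))$ and $\#(\Lambda_2\cap U_n^{+s})\le \#(\Lambda_2\cap B_{n+s+\sqrt{d}/2}(0))$, which your inclusions already provide.
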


\begin{lem}\label{lem:boundary_est.}
	Let $\Lambda\subset\R^d$ be a Delone set and let $k\in\N$ be so that every box in $\frac1k \QQ_d$ contains at most one point of $\Lambda$. Given $s>0$, set $c(s) = 2\cdot 3^{d-1}(1+2(s\sqrt{d}+d))^d$. Then for every $s>0$ and $U \in \QQ_d^*$ we have 
	\begin{equation}\label{eq:U^+s_grows_like_the_boundary_of_U}
	\#\left(\Lambda \cap U^{+s}\right) \le \#(\Lambda \cap U) + k^d\cdot c(s)\cdot\mu_{d-1}(\partial U).\notag
	\end{equation}  
\end{lem}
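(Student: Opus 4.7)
The plan is to bound $\#\bigl(\Lambda\cap(U^{+s}\setminus U)\bigr)$ by a constant times $\mu_{d-1}(\partial U)$, since
\[
\#(\Lambda\cap U^{+s})=\#(\Lambda\cap U)+\#\bigl(\Lambda\cap(U^{+s}\setminus U)\bigr).
\]
The key geometric fact is that any $y\in U^{+s}\setminus U$ lies within distance $s$ of $\partial U$, because the straight segment from $y$ to its nearest point of $U$ must cross $\partial U$. Hence $U^{+s}\setminus U\subseteq(\partial U)^{+s}$, and it suffices to estimate $\#\bigl(\Lambda\cap(\partial U)^{+s}\bigr)$.

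Next I would use the structure of $U\in\QQ_d^*$ to decompose $\partial U$ into $N=\mu_{d-1}(\partial U)$ closed unit $(d-1)$-dimensional faces $F_1,\dots,F_N$, each of them the common face separating a unit cube of $\QQ_d$ that lies in $U$ from an adjacent one that does not. In particular $(\partial U)^{+s}\subseteq\bigcup_{j=1}^N F_j^{+s}$, and the problem reduces to: cover each $F_j^{+s}$ by unit cubes of $\QQ_d$, then bound the $\Lambda$-count inside those cubes.

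For the per-face cube count, after a translation we may take $F_j=[-\tfrac12,\tfrac12]^{d-1}\times\{0\}$; a cube $C(z)\in\QQ_d$ can meet $F_j^{+s}$ only when $\operatorname{dist}(z,F_j)\le s+\tfrac{\sqrt d}{2}$, confining $z$ to an axis-parallel slab whose integer points can be directly enumerated. For the per-cube point count, the assumption that every box of $\tfrac{1}{k}\QQ_d$ contains at most one point of $\Lambda$, together with the fact that each unit cube of $\QQ_d$ is a disjoint union of $k^d$ such boxes, gives at most $k^d$ points of $\Lambda$ per unit cube. Multiplying these two estimates and summing over the $N$ faces yields $\#\bigl(\Lambda\cap(\partial U)^{+s}\bigr)\le k^d\, c(s)\,\mu_{d-1}(\partial U)$ for a suitable $c(s)$.

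The one delicate point is extracting a clean $c(s)$: overlaps between the $s$-neighborhoods of different faces must be absorbed so that a sum of per-face bounds really dominates the number of cubes covering $(\partial U)^{+s}$. The specific form $c(s)=2\cdot 3^{d-1}(1+2(s\sqrt d+d))^d$ is tailored to be generous enough to avoid any careful tracking of these overlaps---the factor $3^{d-1}$ covers cubes that sit across edges shared by adjacent tangent directions, the factor $2$ accounts for the two normal half-spaces of a face, and the final power $(1+2(s\sqrt d+d))^d$ is a crude overestimate for the number of lattice points within distance $s+\sqrt d$ of a unit face. The main obstacle is thus not conceptual but the routine combinatorial bookkeeping that verifies this slack is enough.
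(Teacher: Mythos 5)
Your argument is correct and follows the same skeleton as the paper's: split off $\#\bigl(\Lambda\cap(U^{+s}\setminus U)\bigr)$, observe that this set lies in a bounded neighborhood of $\partial U$, count the unit cubes of $\QQ_d$ needed to cover that neighborhood, and multiply by $k^d$ points of $\Lambda$ per unit cube. The difference is in how the cube count is bounded by $\mu_{d-1}(\partial U)$. The paper passes through volume: the cubes meeting $U^{+s}\setminus U$ have union contained in $(\partial U)^{+(s+\sqrt d)}$, and the estimate $\mu_d\bigl((\partial U)^{+b}\bigr)\le 2\cdot 3^{d-1}(1+2b\sqrt d)^d\,\mu_{d-1}(\partial U)$ is imported from Laczkovich's Lemmas 2.1 and 2.2, with $b=s+\sqrt d$ producing exactly the stated $c(s)$. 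You instead re-derive the comparable bound from scratch by decomposing $\partial U$ into its $N=\mu_{d-1}(\partial U)$ unit faces and enumerating the lattice points within distance $s+\sqrt d/2$ of each face; this is more self-contained and elementary, at the cost of redoing what the citation provides. Two small remarks. First, the ``delicate point'' you flag about overlaps is not actually delicate: you only need an upper bound, and the number of cubes meeting $\bigcup_j F_j^{+s}$ is trivially at most the sum over $j$ of the number meeting $F_j^{+s}$, so overcounting from overlaps costs nothing. Second, your per-face count gives roughly $(2s+\sqrt d+2)^{d-1}(2s+\sqrt d+1)$ cubes, which is indeed dominated by $2\cdot 3^{d-1}(1+2(s\sqrt d+d))^d$ for all $d\ge 1$, so the stated constant is recovered (your heuristic attributions of the factors $2$ and $3^{d-1}$ do not match their actual origin in Laczkovich's proof, but that is immaterial since only an upper bound by the stated $c(s)$ is required).
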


\begin{proof} 
	Fix $s$. For a set $U\in\QQ_d^*$ consider the set $V_s$ of all cubes $C(x)\in\QQ_d$ that intersect the set $U^{+s}\smallsetminus U$, and let $V_s^{(k)}$ be the set of cubes from $\frac1k \QQ_d$ inside $V_s$. then 
	$\#V_s^{(k)} = k^d \cdot \#V_s$.
	Clearly $\bigcup V_s \subset (\partial U)^{+(s+\sqrt{d})}$, and hence 
	\begin{equation}\label{eq:Lemma3.3_1}
	\#(\Lambda \cap (U^{+s}\smallsetminus U))\le \#V_s^{(k)} = 
	k^d \cdot \#V_s = 
	k^d \cdot\mu_d\left(\bigcup V_s\right) \le 
	k^d \cdot\mu_d\left( (\partial U)^{+(s+\sqrt{d})} \right).
	\end{equation}
	For $U\in\QQ_d^*$ and $b>0$, combining  Lemmas 2.1 and 2.2 of \cite{Laczk} one obtains 
	\begin{equation}\label{eq:Lemma3.3_2}
	\mu_d\left( (\partial U)^{+b} \right) \le 2\cdot 3^{d-1}(1+2b\sqrt{d})^d\cdot \mu_{d-1}(\partial U).
	\end{equation}
	Since $	\#(\Lambda \cap U^{+s}) = \#(\Lambda \cap U) +	\#(\Lambda \cap (U^{+s}\smallsetminus U))$, combining \eqref{eq:Lemma3.3_1} and \eqref{eq:Lemma3.3_2} with $b=s+\sqrt{d}$ implies the assertion.
\end{proof}

\begin{proof}[Proof of Theorem \ref{thm:non_BD_criterion}]
	Let $A_m\in\QQ_d^*$ be a sequence satisfying \eqref{eq:non_BD_condition}, and assume that there exists a BD-map $\phi:\Lambda_1 \to \Lambda_2$. Proposition \ref{prop:BD_criterion} then implies that there is a constant $s>0$ so that \eqref{eq:Hall's_condition_with cubes} holds for every $U\in \QQ_d^*$.
	Let $m$ be large enough so that 
	\[\absolute{ \#(\Lambda_1 \cap A_m) - \#(\Lambda_2 \cap A_m) } > k^d\cdot c(s)\cdot \mu_{d-1}(
	\partial A_m), \]
	where $k$ is small enough so that every box in $\frac1k \QQ_d$ contains at most one element of $\Lambda_1$ and at most one element of $\Lambda_2$, and where $c(s)$ is as in Lemma \ref{lem:boundary_est.}. Assume, without loss of generality, that
	\[\#\left(\Lambda_1 \cap A_m\right) > \#\left(\Lambda_2 \cap A_m\right) + k^d\cdot c(s)\cdot\mu_{d-1}(
	\partial A_m), \]
	passing to a subsequence if needed. By Lemma \ref{lem:boundary_est.} we have 
	\[\#\left(\Lambda_1 \cap A_m\right) > \#\left(\Lambda_2 \cap A_m^{+s}\right),\] 
	contradicting \eqref{eq:Hall's_condition_with cubes}, and so there is no BD-map between $\Lambda_1$ and $\Lambda_2$. 
\end{proof}


\section{BD non-equivalent sets associated with a common matrix }\label{sec:periodic+nonperiodic_from_one_matrix}
This chapter contains the proof of Theorem \ref{thm:period+nonperiodic_same_matrix}. Denote by $S$ a $1\times 1$ square and by $R$ a $3\times 1$ rectangle in $\R^2$, and let $\FF=\{S,R\}$. Consider the substitution rules $\varrho_1$ and $\varrho_2$ on $\FF$, as defined in Figures \ref{fig:substitutionrule1} and \ref{fig:substitutionrule2}. 

\begin{figure}[ht!]
	\includegraphics[scale=0.8]{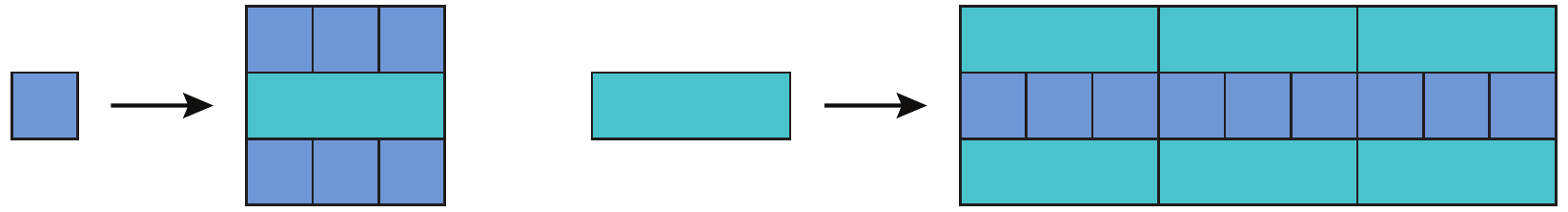}\caption{\label{fig:substitutionrule1}The substitution rule $\varrho_1$.}
\end{figure}

\begin{figure}[ht!]
	\includegraphics[scale=0.8]{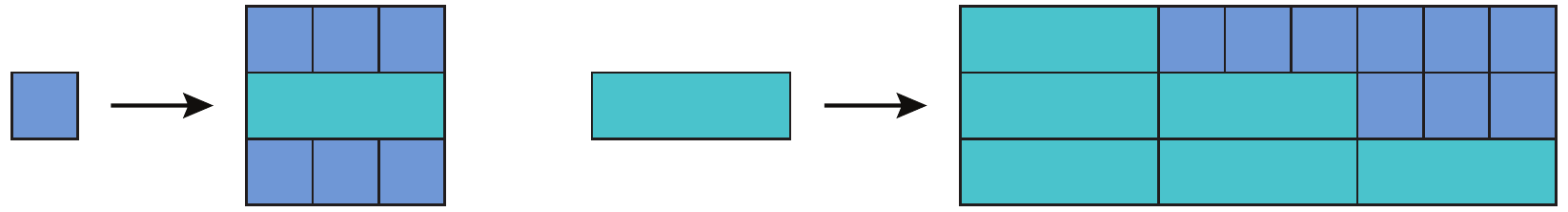}\caption{\label{fig:substitutionrule2}The substitution rule $\varrho_2$.}
\end{figure}

Observe that the inflation factor for both $\varrho_1$ and $\varrho_2$ is $\xi=3$. Set
\begin{align}\label{eq:matrix_info}
&M\df M_{\varrho_1}=M_{\varrho_2}=\begin{pmatrix}
6 & 9 \\ 1 & 6
\end{pmatrix},\quad \lambda_1=\xi^d=9, \lambda_2=3, \\
&
u_1=\begin{pmatrix}
\mu_2(S) & \mu_2(R)
\end{pmatrix}=\begin{pmatrix}
1 & 3
\end{pmatrix} \text{ and } v_1=\begin{pmatrix}
3 \\ 1
\end{pmatrix},
v_2=\begin{pmatrix}
-3 \\ 1
\end{pmatrix}.\notag
\end{align} 
Note that $u_1M=\lambda_1u_1$, and that $Mv_1=\lambda_1v_1$ and $Mv_2=\lambda_2v_2$. 

Denote by $S_m$ a $1\cdot 3^{m-1} \times 1\cdot3^{m-1}$ square and by $R_m$ a $3\cdot 3^{m-1} \times 1\cdot3^{m-1}$ rectangle in $\R^2$. These are called squares and rectangles of \emph{generation $m$}, and note that  $R$ and $S$ are of generation $1$. Apply  $\varrho_2$ to a rectangle $R$ exactly $m$ times to define a patch $\varrho_2^m(R)$ supported on a generation $m+1$ rectangle. Define the patch $P_m$ as the set of tiles in $\varrho_2^m(R)$ supported under the main NW-SE diagonal, as illustrated in Figure \ref{fig:patches p_m}.
	
	\begin{figure}[ht!]
		\includegraphics[scale=0.8]{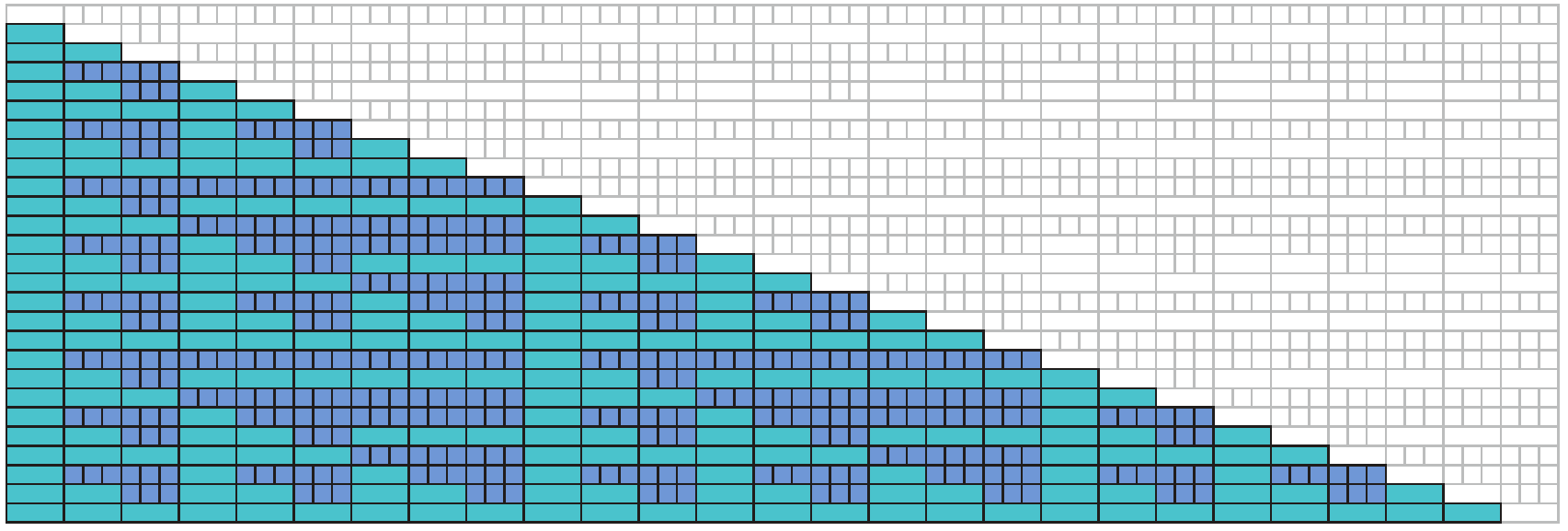}\caption{\label{fig:patches p_m}The patch $P_3$ as a sub-patch of $\varrho_2^3(R)$.}
	\end{figure}
	
We show that $\varrho_1, \varrho_2$ satisfy the assertions of Theorem \ref{thm:period+nonperiodic_same_matrix}. The heart of the proof is given in the following lemma. 

\begin{lem}\label{lem:P_M_counting}The number of tiles in $P_m$, the measure of its support and the measure of its boundary are given by
	\begin{align}\label{eq:number_of_tiles_in_P_m+volP_m}
	&\#P_m =  9^m - 3^m(m+1)\\
	&\mu_2(\supp(P_m)) =  \frac{3}{2}\left(9^m - 3^m\right), \quad \mu_1(\partial\supp(P_m)) =  8\cdot3^m-8\notag.
	\end{align}
\end{lem}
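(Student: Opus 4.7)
My plan is to exploit the self-similar structure of $\varrho_2^m(R)$ via induction on $m$. The crucial geometric observation is that the NW-SE diagonal of the $3^{m+1} \times 3^m$ rectangle has slope $-1/3$ and so passes through the $3^m$ axis-aligned cells $C_k = [3k, 3k+3] \times [3^m-k-1, 3^m-k]$, for $k=0,\dots,3^m-1$, entering each one at its NW corner and exiting at its SE corner. The first step, verified by induction on $m$ using the explicit form of $\varrho_2$ in Figure~\ref{fig:substitutionrule2}, is to establish that every tile of $\varrho_2^m(R)$ that is crossed by the diagonal lies entirely inside one of these cells.

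Given this structural fact, the area formula follows immediately: $\supp(P_m)$ is the SW triangle of the big rectangle with the (closed) cells $C_k$ removed, and each cell is bisected by the diagonal into two triangles of area $\tfrac{3}{2}$, so
\[\mu_2(\supp P_m) = \tfrac{1}{2} \cdot 3^{m+1} \cdot 3^m - 3^m \cdot \tfrac{3}{2} = \tfrac{3}{2}(9^m - 3^m).\]
For the boundary, I decompose $\partial \supp(P_m)$ into three pieces: the portion of the big rectangle's bottom edge outside $C_{3^m-1}$, of length $3^{m+1} - 3$; the portion of the left edge outside $C_0$, of length $3^m - 1$; and a staircase running along the bottoms of $C_0, \dots, C_{3^m - 2}$ with vertical drops of length $1$ between consecutive cells, for a total staircase length of $3(3^m - 1) + (3^m - 1) = 4(3^m - 1)$. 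Summing these contributions yields $(3^{m+1} - 3) + (3^m - 1) + 4(3^m - 1) = 8\cdot 3^m - 8$.

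For the tile count $\#P_m = 9^m - 3^m(m+1)$, I would argue by induction using $\varrho_2^{m+1}(R) = \varrho_2(\varrho_2^m(R))$. Each tile of $P_m$, after inflation by $3$ and substitution, contributes either $7$ or $15$ below-diagonal tiles at level $m+1$ (depending on whether it is a square or a rectangle), while each of the $3^m$ straddling cells at level $m$ contributes a fixed additional collection of below-diagonal subtiles after substitution. The split of $\#P_m$ into squares and rectangles is recoverable from the pair $(\#P_m, \mu_2(\supp P_m))$ using tile areas $1$ and $3$; this linear relation reduces the question to a scalar recursion which one checks, using the base case $\#P_1 = 3$ read off from Figure~\ref{fig:substitutionrule2}, is satisfied by the claimed closed form.

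The main obstacle is verifying the inductive step of the structural claim, namely showing that the substitution $\varrho_2$ is compatible with the diagonal cell partition at all scales. Concretely, one must check that when $\varrho_2$ is applied to each tile of the inflated patch, no resulting tile crosses a level-$(m+1)$ cell boundary except along the diagonal itself, which requires a careful case analysis of the two substitution rules $\varrho_2(S)$ and $\varrho_2(R)$ and of the relative position of each inflated level-$m$ cell within the level-$(m+1)$ cell substructure.
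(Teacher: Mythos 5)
Your strategy is sound and arrives at the correct values, but it is genuinely different from the paper's argument, and it leaves one quantitative loose end that must be closed before the tile count is actually proved. The paper does not induct on $m$ at all: it decomposes $\supp(P_m)$ into $3^j$ rectangles of generation $m-j+1$ for $j=1,\dots,m$, observes that each generation-$j$ rectangle supports a translate of $\varrho_2^{j-1}(R)$, and then evaluates $\sum_{j=0}^{m-1}3^{m-j}M^j\binom{0}{1}$ in closed form by writing $\binom{0}{1}=\frac12(v_1+v_2)$ in the eigenbasis of $M$ from \eqref{eq:matrix_info}. This yields $n_m(S)$ and $n_m(R)$ simultaneously, and the area then falls out as $n_m(S)+3n_m(R)$ rather than being computed geometrically as you do. Your diagonal-cell picture is the same structural input as the paper's rectangle decomposition (both are read off from Figure \ref{fig:substitutionrule2} and neither is verified in full rigour in the paper either), and your area and boundary computations are correct; for the area you implicitly need not only that every crossed tile lies in some $C_k$ but that each $C_k$ is a union of (crossed) tiles --- this does follow from your structural claim together with the fact that a $3\times1$ cell can only be tiled by three squares or one rectangle, but it deserves a sentence.

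The concrete gap is in the tile-count recursion: you cannot ``check that the claimed closed form satisfies the scalar recursion'' until you know the value of the ``fixed additional collection'' contributed by each straddling cell, and you never determine it. Its value is forced to be $3$, and this hinges on a fact you have not isolated: at every generation each straddling cell $C_k$ is a \emph{single rectangle tile} $R$ (equivalently, in $\varrho_2(R)$ the three diagonal cells are the three remaining rectangle tiles, with the nine squares lying above the diagonal). If instead a cell consisted of three squares, its inflation would be tiled by three copies of $\varrho_2(S)$, and since $\varrho_2(S)$ contains only one rectangle, the below-diagonal non-crossed part of that inflated cell (area $9$) could not be covered by $3$ tiles, and the recursion constant would change. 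So the statement ``every straddling cell is a single rectangle'' must be carried along as part of your induction hypothesis, alongside the structural claim about the diagonal. Once that is in place, the recursion reads $\#P_{m+1}=7n_m(S)+15n_m(R)+3\cdot 3^m$, which via $n_m(R)=\frac12\bigl(\mu_2(\supp P_m)-\#P_m\bigr)$ and your area formula becomes $\#P_{m+1}=3\#P_m+6\cdot 9^m-3^{m+1}$, and the closed form does satisfy it with $\#P_1=3$. With that addition your proof is complete, though noticeably longer than the paper's eigenvector computation.
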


\begin{proof}
	The support of $P_m$ can be described as a disjoint union of rectangles, consisting of $3^j$ rectangles of generation $m-j+1$ for every $j = 1,\ldots,m$, as illustrated in Figure \ref{Fig:suppp_3}.
	
	\begin{figure}[ht!]
		\includegraphics[scale=0.8]{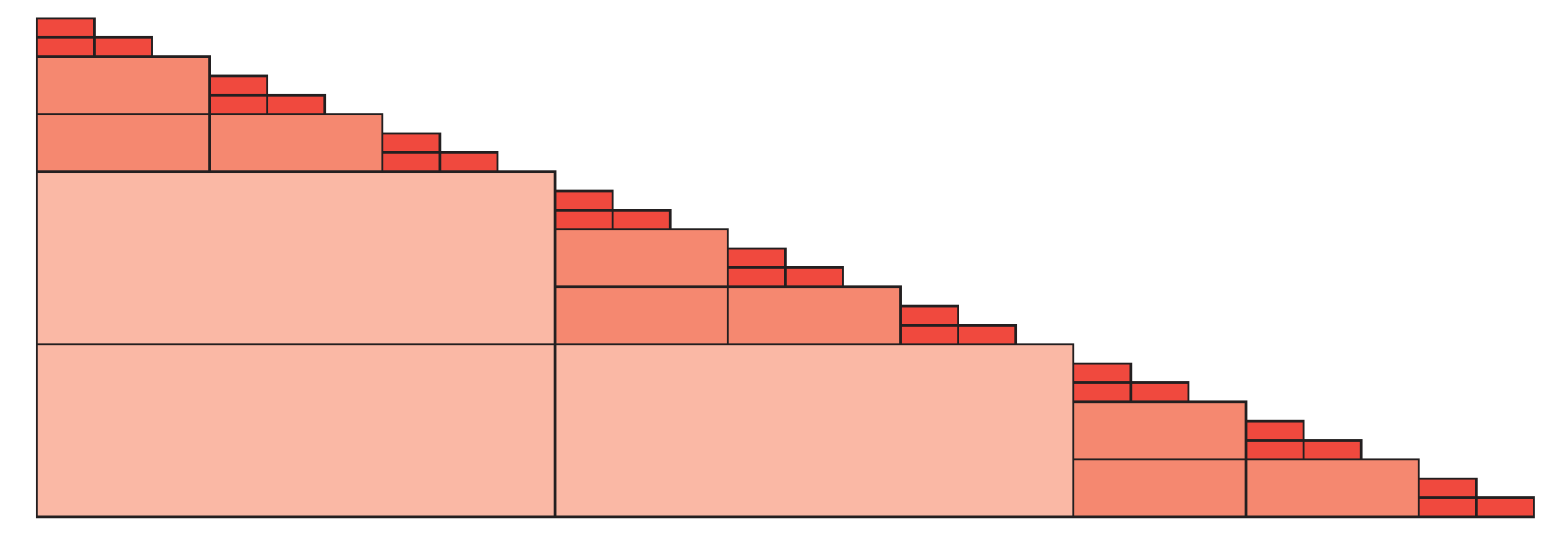}
		\caption{\label{Fig:suppp_3}Decomposition of $\supp(P_3)$ into rectangles of generations $1, 2$ and $3$.}
	\end{figure}

	For $j = 1,\ldots,m$, any such generation $j$ rectangle is the support of a translated patch of the form $\varrho_2^{j-1}(R)$. The number of squares in $\varrho_2^{j-1}(R)$ is given by the first coordinate of $M^{j-1}\binom{0}{1}$, and the number of rectangles by the second coordinate. Therefore $\#P_m$ is given by the sum of the coordinates of
	\begin{equation}\label{eq:number_of_tiles_in_P_m-vector} 
	\sum_{j=0}^{m-1} 3^{m-j}M^j\binom{0}{1} =\frac{3^m}{2}\left(\frac{3^m-1}{2}\binom{3}{1}+m\binom{-3}{1}\right), 
	\end{equation}
	where the equality follows from the fact that $\binom{0}{1}=\frac{1}{2}(v_1+v_2)$, with $v_1,v_2$ the eigenvectors of $M$ described in \eqref{eq:matrix_info}. Let $n_m(S)$ and $n_m(R)$ be the number of squares and rectangles in $P_m$, respectively. By \eqref{eq:number_of_tiles_in_P_m-vector}
	\begin{equation}
	n_m(S)=\frac{3}{4}\left(9^m-3^m(2m+1)\right),  n_m(R)=\frac{1}{4}\left(9^m+3^m(2m-1)\right).\notag
	\end{equation}
	Since $\#P_m=n_m(S)+n_m(R)$ and $\mu_2(\supp(P_m))=n_m(S)+3n_m(R)$ the first two assertions follow, and the third follows directly from the definition of $P_m$.
\end{proof}
	
\begin{lem}\label{lem:P_M_existence}
	Any tiling in $\XX_{\varrho_2}$ contains a translated copy of $P_m$ for any  $m\in\N$.
\end{lem}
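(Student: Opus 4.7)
The plan is to exhibit $P_m$ as a sub-patch of some element of $\PP_{\varrho_2}$, and then to use primitivity of $\varrho_2$ together with the definition of $\XX_{\varrho_2}$ to force such an iterate to appear in every tiling of $\XX_{\varrho_2}$. By construction, $P_m$ is a sub-patch of $\varrho_2^m(R)\in\PP_{\varrho_2}$. Moreover, since $M_{\varrho_2}$ has strictly positive entries, the patch $\varrho_2(T)$ contains at least one rectangle tile for each $T\in\FF=\{S,R\}$. Applying $\varrho_2^m$ to such a rectangle tile sitting inside $\varrho_2(T)$ produces a translated copy of $\varrho_2^m(R)$, and hence of $P_m$, as a sub-patch of $\varrho_2^{m+1}(T)$ for every $T\in\FF$.

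Now fix $\TT\in\XX_{\varrho_2}$ and any tile $T_0\in\TT$, and for each $r>0$ let $Q_r\subset\TT$ denote the patch of all tiles of $\TT$ within distance $r$ of $T_0$. By the defining property of $\XX_{\varrho_2}$, for each $r$ there exist $T_r\in\FF$, $N_r\in\N$, and a translate $\widetilde{P}_r$ of $\varrho_2^{N_r}(T_r)$ such that $Q_r$ is a sub-patch of $\widetilde{P}_r$. Since $\#Q_r\to\infty$ while $\#\varrho_2^N(T)$ is of order $9^N$, necessarily $N_r\to\infty$. For $r$ large enough that $N_r\ge m+1$, decompose $\widetilde{P}_r$ through its natural level-$(m+1)$ super-tile structure into translated copies of $\varrho_2^{m+1}(T_i)$, $T_i\in\FF$, and let $S_r$ be the super-tile containing $T_0$. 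Its diameter is at most $D_{m+1}\df\max_{T\in\FF}\mathrm{diam}(\varrho_2^{m+1}(T))$, a constant depending only on $m$. Once $r$ exceeds $D_{m+1}$ plus the maximum prototile diameter, $\supp(S_r)\subset\supp(Q_r)$, which in turn forces $S_r$ itself to be a sub-patch of $Q_r\subset\TT$. By the previous paragraph $S_r$ contains a translated copy of $P_m$, completing the proof.

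\textbf{Main obstacle.} The delicate step is promoting ``$Q_r$ is a sub-patch of $\widetilde{P}_r$'' into the assertion that a full level-$(m+1)$ super-tile $S_r$ of $\widetilde{P}_r$ already lies inside $Q_r$. This is handled by the observation that the tiles of $Q_r$ form a subset of the tiles of $\widetilde{P}_r$ with disjoint interiors whose union is exactly $\supp(Q_r)$, so that any tile of $\widetilde{P}_r$ whose interior is contained in $\supp(Q_r)$ must already belong to $Q_r$. Together with the choice of $r$ ensuring $\supp(S_r)\subset\supp(Q_r)$, this yields $S_r\subset Q_r$ and closes the argument.
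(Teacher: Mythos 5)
Your proof is correct and follows the same basic route as the paper: locate $P_m$ inside an iterate $\varrho_2^m(R)$ and show that such an iterate occurs (as a full level-$(m+1)$ super-tile) in every tiling of $\XX_{\varrho_2}$. The only difference is one of completeness: the paper dispatches the occurrence of $\varrho_2^m(R)$ in every $\TT\in\XX_{\varrho_2}$ in one sentence ("by definition"), whereas you supply the actual argument — primitivity to get a copy of $R$ inside every first-order super-tile, growth of $\#Q_r$ to force $N_r\to\infty$, and the super-tile decomposition to extract a full copy inside $Q_r$ — which is a welcome filling-in of a step the definition of $\XX_{\varrho_2}$ does not literally give.
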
 

\begin{proof}
	By definition of $\XX_{\varrho_2}$, every tiling in $\XX_{\varrho_2}$ contains a translated copy of $\varrho_2^m(R)$ for any $m\in\N$, which contains $P_m$ as a sub-patch.
\end{proof}

\begin{proof}[Proof of Theorem \ref{thm:period+nonperiodic_same_matrix}]
First observe that assertion (i) holds, since any tiling defined by the substitution rule $\varrho_1$ is periodic, with two linearly independent periods $(3,0), (0,2)\in\R^2$, and a fundamental domain as illustrated in Figure \ref{fig: period}.

	\begin{figure}[ht!]
	\includegraphics[scale=0.8]{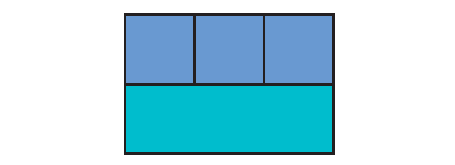}\caption{\label{fig: period}The fundamental domain of a periodic tiling associated with the substitution rule $\varrho_1$.}
\end{figure}

It is left to prove assertion (ii). Let $\TT\in\XX_{\varrho_2}$ and let $\Lambda$ be an associated Delone set. By \eqref{eq:alpha_def}, the natural density of $\Lambda$ is  $\alpha=\frac{\langle\mathbf{1},v_1\rangle}{\langle u_1,v_1 \rangle}=\frac{2}{3}$, and so by Corollary \ref{cor:different_densities_implies_not_BD} there is no BD-map $\phi:\Lambda \to \beta\Z^d$ for any $\beta \neq \sqrt{\alpha}$.

By Lemma \ref{lem:P_M_existence}, $\TT$ contains translated copies of all patches $P_m$, and we fix $A_m=\supp(P_m)$ for some translated copy of $P_m$ in $\TT$. Note that the inequality $\absolute{\#(\sqrt{\alpha}\Z^2 \cap A_m) - \alpha\mu_2(A_m)} \le C\mu_1(\partial A_m)$ holds for any $m\in\N$, with some fixed constant $C$. In order to show that there is no BD-map $\phi:\Lambda\to\sqrt{\alpha}\Z^2$, in view of Theorem \ref{thm:non_BD_criterion}, it suffices to show that 
\begin{equation}\label{eq:non_BD_to_Z^d_condition}
\lim_{m\to\infty} \frac{| \#(\Lambda \cap A_m) - \alpha\mu_2(A_m) |}{\mu_1(\partial A_m)} = \infty.
\end{equation}
Indeed, by \eqref{eq:number_of_tiles_in_P_m+volP_m} we have 
\[\absolute{\#P_m-\alpha \mu_2(\supp(P_m))} = m3^m, \quad \mu_{1}(\partial\supp(P_m)) \le 8\cdot 3^m\]
and \eqref{eq:non_BD_to_Z^d_condition} follows. 
\end{proof}


\section{Generating continuously many BD non-equivalent sets}\label{sec:continuously_many_non-BD_sets}
This chapter contains the proof of Theorem \ref{thm:continuously_many_things}. Let $\sigma_1$ and $\sigma_2$ be the substitution rules on $\FF$ defined in Figures \ref{fig:SubstitutionRule2diagonals} and \ref{fig:SubstitutionRule2diagonals2}, where once again $\FF=\{S,R\}$ with $S$ a $1\times 1$ square and $R$ a $3\times 1$ rectangle. 

\begin{figure}[ht!]
	\includegraphics[scale=0.8]{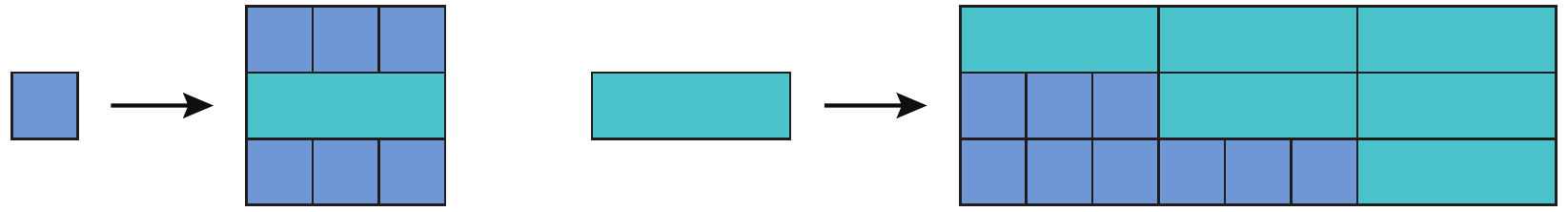}\caption{\label{fig:SubstitutionRule2diagonals}The substitution rule $\sigma_1$.}
\end{figure} 

\begin{figure}[ht!]
	\includegraphics[scale=0.8]{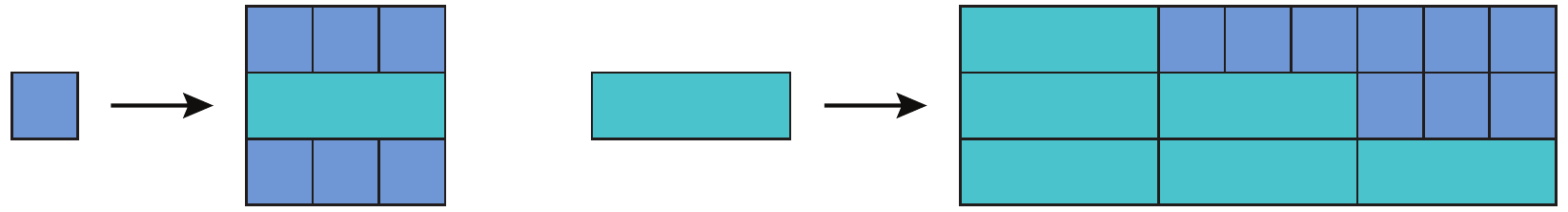}\caption{\label{fig:SubstitutionRule2diagonals2}The substitution rule $\sigma_2$.}
\end{figure} 

 Note that $M_{\sigma_1}=M_{\sigma_2}$ is the substitution matrix studied in \S \ref{sec:periodic+nonperiodic_from_one_matrix}, and so we let $M, v_1, v_2, \lambda_1, \lambda_2$ and $\xi$ be as in \eqref{eq:matrix_info}. Denote $\AA=\{1,2\}$ and $\Omega=\AA^\N$. 

Similarly to our definition of $P_m$ in \S \ref{sec:periodic+nonperiodic_from_one_matrix}, given a finite word $w\in\AA^m$ we define the patch $P_m^w$ to be the sub-patch of $w.R$ that consists of all tiles in $w.R$, which are situated under the NW-SE main diagonal, where the action of a finite word on a patch is as defined in \eqref{eq:action_of_a_finite_word}. For example, the patch corresponding to the word $w=112\in \AA^3$ is illustrated in Figure \ref{fig:Patch211}.

\begin{figure}[ht!]
	\includegraphics[scale=0.8]{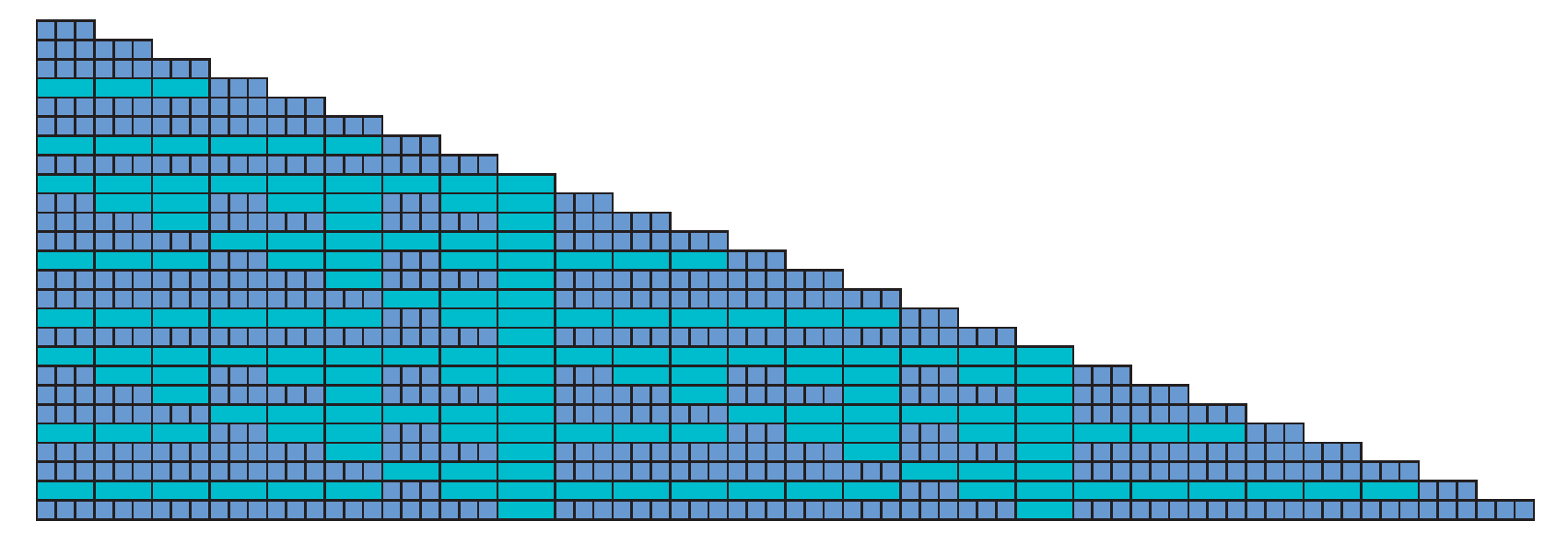}\caption{\label{fig:Patch211}The patch $P_3^{112}$.}
\end{figure}

In fact, since $\sigma_2=\varrho_2$ the patch $P_m$ described in  
\S \ref{sec:periodic+nonperiodic_from_one_matrix} is the same as the patch associated with the word $w=2^m=2\cdots2$, that is,
$P_m=P_m^{2^m}$.

As was done for the support of $P_m$ and illustrated in Figure \ref{Fig:suppp_3}, we decompose the support of $P_m^w$ into disjoint squares and rectangles of generations $j = 1,\ldots,m$. This decomposition depends on the word $w$, where for every $1\le j\le m$ the number of squares and rectangles of generation $j$ in the decomposition depends on $w_j$, the $j$'th letter of $w$, in the following way:

\begin{equation}\label{eq2:P_m-description}
\supp(P_m^w) \text{ contains: } 
\begin{cases}
9\cdot 3^{m-j}\text{ squares }S_j, &\text{ if } w_{j}=1, \text{ or}\\
3\cdot 3^{m-j}\text{ rectangles }R_j, &\text{ if } w_{j}=2
\end{cases}
\end{equation}
For example, the decomposition of the support of the patch $P_3^{112}$, associated with the word $w=112$, into squares and rectangles is illustrated in Figure \ref{fig:supp211}.

\begin{figure}[ht!]
 	\includegraphics[scale=0.8]{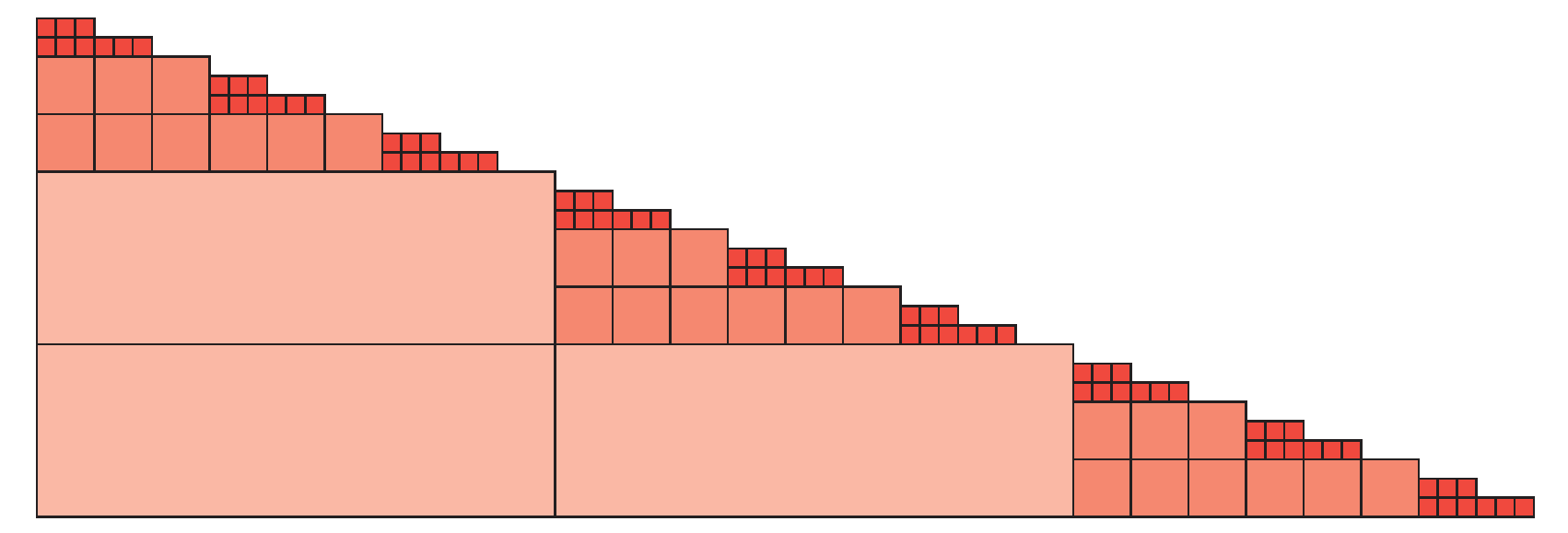}\caption{\label{fig:supp211}The decomposition of $\supp(P_3^{112})$ into squares of generations $1$ and $2$, and rectangles of generation $3$.}
\end{figure}

\begin{lem}\label{lem2:P_m^sigma_counting}
	Let $w\in\AA^m$. The number of tiles in $P_m^w$ is given by
	\begin{align}\label{eq2:number_of_tiles_in_P_m}
	\#P_m^w = 9^m - 3^m \left[1+\sum_{j=1}^{m}
	\begin{cases}
	1, &\text{if }\: w_j=1\\
	-1,&\text{if }\: w_j=2
	\end{cases}
	\right]
	\end{align}
\end{lem}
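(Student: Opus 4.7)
The plan is to prove the formula by induction on $m$, exploiting a recursive geometric decomposition of $P_m^w$ together with the fact that the two substitution rules share a common matrix $M_{\sigma_1}=M_{\sigma_2}=M$.

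First I would make precise the recursive structure implicit in \eqref{eq2:P_m-description}. Writing $w.R=w(m-1).(\sigma_{w_m}(R))$ expresses $w.R$ as the application of the length-$(m-1)$ word action $w(m-1)$ to the generation-$2$ patch $\sigma_{w_m}(R)$. Since the support of $\sigma_i(R)$ is a $9\times 3$ rectangle for both $i=1,2$, the NW-SE diagonal of the outer generation-$(m+1)$ rectangle of $w.R$ is parallel to the diagonal of $\sigma_{w_m}(R)$, and up to scaling by $3^{m-1}$ the two diagonals agree. Consequently, $\supp(P_m^w)$ decomposes as the disjoint union of (a) the $\#P_1^{(w_m)}$ generation-$m$ sub-regions obtained by inflating, by the factor $3^{m-1}$, the tiles of $\sigma_{w_m}(R)$ lying under its diagonal; and (b) three translated copies of $\supp(P_{m-1}^{w(m-1)})$, one inside each of the three generation-$m$ rectangles of $w.R$ that straddle the outer diagonal, coming from the three on-diagonal horizontal rectangles in $\sigma_{w_m}(R)$.

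Next I would compute tile counts. Because $M_{\sigma_1}=M_{\sigma_2}=M$, the number of tiles in $a.S$ and in $a.R$ depends only on $|a|$ and not on the particular word $a$. Decomposing $\binom{1}{0}=\tfrac{1}{6}(v_1-v_2)$ and $\binom{0}{1}=\tfrac{1}{2}(v_1+v_2)$ in the eigenbasis from \eqref{eq:matrix_info} and summing the coordinates of $M^k\binom{1}{0}$ and $M^k\binom{0}{1}$ yields
\[
\#(a.S)=\tfrac{1}{3}\bigl(2\cdot 9^k+3^k\bigr), \qquad \#(a.R)=2\cdot 9^k-3^k
\]
for every word $a$ of length $k$. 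Combined with $\#P_1^{(1)}=9$ and $\#P_1^{(2)}=3$, which is the $m=1$ instance of \eqref{eq2:P_m-description}, the contribution of part (a) to $\#P_m^w$ equals $\tfrac{2}{3}9^m+3^m$ when $w_m=1$ and $\tfrac{2}{3}9^m-3^m$ when $w_m=2$.

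Assembling the two parts gives the recursion $\#P_m^w=\tfrac{2}{3}9^m\pm 3^m+3\cdot \#P_{m-1}^{w(m-1)}$, where the sign in front of $3^m$ is determined by $w_m$. The base case $m=1$ is immediate from \eqref{eq2:P_m-description}. In the inductive step, substituting the inductive hypothesis for $\#P_{m-1}^{w(m-1)}$, the terms $\tfrac{2}{3}9^m$ and $3\cdot 9^{m-1}=\tfrac{1}{3}9^m$ combine into $9^m$, while the remaining $3^m$-terms assemble into the bracketed sum in \eqref{eq2:number_of_tiles_in_P_m}. I expect the main obstacle to be the geometric step: one must verify that in both $\sigma_1(R)$ and $\sigma_2(R)$ exactly three tiles straddle the NW-SE diagonal, that all three are horizontal rectangles arranged in a staircase, and that their own diagonals align after translation with the diagonal of the outer $9\times 3$ rectangle. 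This is a structural property to be read off from Figures \ref{fig:SubstitutionRule2diagonals} and \ref{fig:SubstitutionRule2diagonals2}; once established, the induction reduces to routine algebra.
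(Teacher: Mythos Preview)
Your inductive approach is correct and takes a genuinely different route from the paper's. The paper uses the decomposition \eqref{eq2:P_m-description} as a single global sum: it writes $\#P_m^w$ as the coordinate sum of $\sum_{j=1}^m 3^{m-j}M^{j-1}\bigl(9\binom{1}{0}\text{ or }3\binom{0}{1}\bigr)$, depending on $w_j$, and evaluates this in one stroke via the eigen-decompositions $\binom{1}{0}=\tfrac16(v_1-v_2)$ and $\binom{0}{1}=\tfrac12(v_1+v_2)$. You instead peel off only the top generation, obtaining the recursion $\#P_m^w=\tfrac{2}{3}\,9^m\pm 3^m+3\,\#P_{m-1}^{w(m-1)}$ and closing by induction. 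Both arguments rest on the same geometric input---that in each $\sigma_i(R)$ exactly three horizontal $3\times 1$ rectangles straddle the diagonal in staircase position, while the remaining below-diagonal tiles are nine unit squares ($i=1$) or three rectangles ($i=2$)---which is precisely what \eqref{eq2:P_m-description} records and what you correctly flag as the point requiring inspection of Figures~\ref{fig:SubstitutionRule2diagonals} and~\ref{fig:SubstitutionRule2diagonals2}. The paper's one-shot sum is marginally shorter; your recursion makes the self-similarity explicit and avoids summing a telescoping series.

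One caveat worth noting: your base values $\#P_1^{(1)}=9$ and $\#P_1^{(2)}=3$ are correct, and carrying your induction through (just as the paper's own computation does) actually yields $\#P_m^w=9^m-3^m\bigl[1+\sum_j(-1)^{w_j}\bigr]$, i.e.\ with cases $-1$ for $w_j=1$ and $+1$ for $w_j=2$. This is consistent with Lemma~\ref{lem:P_M_counting} at $w=2^m$, whereas the displayed formula \eqref{eq2:number_of_tiles_in_P_m} has the two cases interchanged. That is a harmless typo in the statement, since downstream only differences $\bigl|\#P_m^{w}-\#P_m^{w'}\bigr|$ are used; do not be alarmed when your base case fails to match \eqref{eq2:number_of_tiles_in_P_m} as printed.
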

\begin{proof}
	As in the proof of Lemma \ref{lem:P_M_counting}, each of the squares of generation $j$ in the decomposition of $\supp(P_m^w)$ is the support of a patch $w.S$, where $w\in\{1,2\}^{j-1}$, and analogously for rectangles. The number of tiles in each square $S_j$ is given by the sum of the coordinates of the vector $M^{j-1}\binom{1}{0}$, and the number of tiles in rectangles $R_j$ is given by the sum of the coordinates of the vector $M^{j-1}\binom{0}{1}$. Therefore, in view of \eqref{eq2:P_m-description}, $\#P_m^w$ is given by the sum of the coordinates of
	\begin{align}\label{eq2:no._of_tiles_in_P_n_vector}
	\sum_{j=1}^{m} \begin{cases}
	9\cdot 3^{m-j}M^{j-1}\binom{1}{0},&\text{if }\: w_j=1  \\
	3\cdot 3^{m-j}M^{j-1}\binom{0}{1},&\text{if }\: w_j=2 
	\end{cases}. 
	\end{align}
	We plug in $\binom{1}{0}=\frac{1}{6}\left(v_1-v_2 \right)$ and $\binom{0}{1}=\frac{1}{2}\left(v_1+v_2 \right)$  in \eqref{eq2:no._of_tiles_in_P_n_vector} to obtain
	\begin{equation}
	=\frac{3^m}{2}\left[
	\frac{3^m-1}{2}\cdot\binom{3}{1} + \binom{-3}{1}\cdot\sum_{j=1}^{m}
	\begin{cases}
	-1 ,&\text{if }\:  w_j=1  \\
	1 ,&\text{if }\:  w_j=2 
	\end{cases}\right]. \label{eq2:number_of_tiles_in_P_m-vector}\notag
	\end{equation}
Summing the coordinates, the assertion follows. 
\end{proof}

Next, we show that the sum in \eqref{eq2:number_of_tiles_in_P_m} can be used to approximate any real $\gamma\in[-1,1]$. These approximations depend on $m$, and possess an inductive property, which is later used to concoct specific patches that appear in the proof of Theorem \ref{thm:continuously_many_things}.

\begin{lem}\label{lem2:finite_words_approx._gamma}
	For any real $\gamma\in[-1,1]$ there exists an infinite word $w^\gamma\in\Omega$, so that for any $m\in\N$ 
	\begin{equation}\label{eq2:sum_close_to_any_gamma}
		\absolute{\gamma - \frac{1}{m}\sum_{j=1}^{m}
		\begin{cases}
		1, &\text{if }\: w^\gamma_j=1\\
		-1,&\text{if }\: w^\gamma_j=2
		\end{cases}}\le \frac{1}{m}.
	\end{equation}
\end{lem}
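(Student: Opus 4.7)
The plan is to construct $w^\gamma$ greedily. For a finite word $w$ of length $m$, set $s_m(w) = \sum_{j=1}^m \eps_j$ where $\eps_j$ is $+1$ if $w_j = 1$ and $-1$ if $w_j = 2$, and write $d_m = s_m - m\gamma$. The inequality \eqref{eq2:sum_close_to_any_gamma} is equivalent to $|d_m| \le 1$ for every $m \ge 1$, so I would construct $w^\gamma$ letter by letter so as to maintain the invariant $|d_m| \le 1$, starting from $d_0 = 0$.

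The construction is: at step $m$, having already fixed $w^\gamma_1, \ldots, w^\gamma_{m-1}$ (so $d_{m-1}$ is determined), set $w^\gamma_m = 1$ if $d_{m-1} \le \gamma$ and $w^\gamma_m = 2$ otherwise. Verifying $|d_m| \le 1$ then reduces to a short case split. In the first case $d_m = d_{m-1} + 1 - \gamma$, which is at most $1$ by the selection rule and at least $-1 + 1 - \gamma = -\gamma \ge -1$ since $\gamma \le 1$. The second case $d_m = d_{m-1} - 1 - \gamma$ is symmetric, exceeding $-1$ by the selection rule and bounded above by $1 - 1 - \gamma = -\gamma \le 1$ since $\gamma \ge -1$. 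A straightforward induction on $m$ then delivers the claim.

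I do not expect any real obstacle here — the hypothesis $\gamma \in [-1, 1]$ is precisely what makes both branches of the greedy selection close up, and the argument is a standard balancing trick. The only mild conceptual point is to frame the inductive invariant correctly: it would be tempting to try to maintain the averaged inequality $|s_m/m - \gamma| \le 1/m$ directly, but the cleaner formulation is to track the unnormalized deviation $|s_m - m\gamma|$, since the correction available at each step has fixed size $1$ rather than shrinking with $m$.
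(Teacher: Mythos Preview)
Your proof is correct and follows essentially the same greedy inductive construction as the paper's proof: both maintain the invariant $|s_m - m\gamma|\le 1$ by choosing each new letter to push the running deviation back toward zero, with the paper phrasing this via the normalized error $c=s_m/m-\gamma$ and you via the unnormalized $d_m=s_m-m\gamma$. The selection rules and base cases coincide exactly (the paper's choice of $u\in\{\pm1\}$ with $|\gamma-mc-u|\le 1$ is your rule $u=1$ iff $d_{m-1}\le\gamma$), so the arguments are the same up to bookkeeping.
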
	
	
\begin{proof}
	We define $w^\gamma$ inductively. For $m=1$ set $w^\gamma_1=1$ if $\gamma\ge 0$ and $w^\gamma_1=2$ otherwise. Assume that the first $m$ letters in $w^\gamma$ are defined and that \eqref{eq2:sum_close_to_any_gamma} holds for $m$. Then there exists a constant $-\frac{1}{m}\le c \le \frac{1}{m}$ for which
	\begin{equation*}
	\frac{1}{m}\sum_{j=1}^{m}
	\begin{cases}
	1, &\text{if }\: w^\gamma_j=1\\
	-1,&\text{if }\: w^\gamma_j=2
	\end{cases}=\gamma+c.	
	\end{equation*}
	Since $\absolute{\gamma - mc} \le 2$, there is a choice of $u\in\{\pm1\}$ for which $\absolute{\gamma - mc-u}\le1$, and we set $w^\gamma_{m+1}=1$ if $u=1$ and 
	$w^\gamma_{mj+1}=2$ otherwise. It follows that
	\begin{equation}
	\absolute{\gamma - \frac{1}{m+1}\sum_{j=1}^{m+1}
		\begin{cases}
		1, &\text{if }\: w^\gamma_j=1\\
		-1,&\text{if }\: w^\gamma_j=2
		\end{cases}} \\
	=\absolute{\gamma - \frac{1}{m+1} \left(m\cdot (\gamma+c) +u\right)}\\
	\le \frac{1}{m+1},\notag
	\end{equation}
	and so $w^\gamma$ satisfies \eqref{eq2:sum_close_to_any_gamma} for $m+1$.
\end{proof}
	
We now turn to the construction of a family of continuously many tilings, so that Delone sets associated with distinct tilings are BD non-equivalent.

Fix a rectangular tile $R$ centered at the origin. Observe that applying the substitution rules $\sigma_1,\sigma_2$ to $R$, the tile containing the origin in both $\sigma_1(R)$ and in $\sigma_2(R)$ is again $R$. Therefore if $w'$ is a prefix of $w$ then the patches $w.R$ and $w'.R$ coincide on the support of $w'.R$, and so for every $\gamma\in[-1,1]$ and $m\in\N$, the patch $w^\gamma(m+1).R$ contains the patch $w^\gamma(m).R$ as a sub-patch. Thus 
\begin{equation}
\TT[\gamma] \df \bigcup_{m\in\N} w^\gamma(m).R \notag
\end{equation}
is a tiling of the plane, and clearly $\TT[\gamma]\in\XX_{\Sigma,w^\gamma}$, with $\Sigma=(\sigma_1,\sigma_2)$.

Next, and in analogy to \S4, for any $m\in\N$ we define $A_m$ to be the copy of the support of a patch of the form $P^w_m$ situated at the bottom left corner of a generation $m+1$ rectangle centered at the origin, where $w$ is some arbitrary word in $\AA^m$. As a result of our choice of $\sigma_1$ and $\sigma_2$, this defines a sequence of sets in $\QQ_2^*$. Moreover, for every $m\ge 2$ and every $\gamma\in[-1,1]$ the set $A_m$ is the support of the patch $P_{m}^{w^\gamma(m)}$ in $\TT[\gamma]$. \\

\noindent \textbf{Remark.} We provide the explicit construction above for clarity reasons only. In fact, for any $\gamma\in[-1,1]$ the existence of a tiling with the properties shown above to hold for $\TT[\gamma]$ can be established in a more general setting by the standard fact that the tiling space $\XX_{\Sigma,w^\gamma}$ is closed under translations and compact. 

\begin{proof}[Proof of Theorem \ref{thm:continuously_many_things}]
	Let $\gamma_1\neq\gamma_2$ be two real numbers in $[-1,1]$. We show that any two Delone sets $\Lambda_1, \Lambda_2$ associated with $\TT[\gamma_1], \TT[\gamma_2]$, respectively, are BD non-equivalent. Since any other choice would give BD equivalent Delone sets, we assume without loss of generality that $\Lambda_1, \Lambda_2$ are obtained from $\TT[\gamma_1], \TT[\gamma_2]$ by placing a point at the center of each tile. 
	We now show that the sets $A_m\in\QQ_2^*$ defined above satisfy condition \eqref{eq:non_BD_condition} of Theorem \ref{thm:non_BD_criterion}. 
	
	Indeed, by Lemmas \ref{lem2:P_m^sigma_counting} and \ref{lem2:finite_words_approx._gamma} we have
	\begin{align}
	&
	\absolute{\#(\Lambda_1\cap A_m) - \#(\Lambda_2\cap A_m)}= \absolute{\#P_m^{w^{\gamma_1}(m)} - \#P_m^{w^{\gamma_2}(m)}}  \notag \\
	&=3^m\absolute{
		\left[ \sum_{j=1}^m
		\begin{cases}
		1, &\text{if }\: w^{\gamma_1}_j=1\\
		-1,&\text{if }\: w^{\gamma_1}_j=2
		\end{cases} \right]   -
		\left[ \sum_{j=1}^m
		\begin{cases}
		1, &\text{if }\: w^{\gamma_2}_j=1\\\label{eq:numerator_estimate_for_gammas2}
		-1,&\text{if }\: w^{\gamma_2}_j=2
		\end{cases} \right]	} \\
	&\ge 3^m  \absolute{m(\gamma_1-\gamma_2) - c_{\gamma_1,\gamma_2}},\notag
	\end{align}
	where $c_{\gamma_1,\gamma_2}$ is some constant with $\absolute{c_{\gamma_1,\gamma_2}}\le2$. Note that $A_m$ is the support of a translate of a patch $P_m$ as in \S4, and so it follows from  \eqref{eq:number_of_tiles_in_P_m+volP_m} that $\mu_1(\partial A_m)$ is bounded from above by $8\cdot 3^m$. Combining this with \eqref{eq:numerator_estimate_for_gammas2}, and using the assumption that $\gamma_1 \neq \gamma_2$, we deduce
	\[\lim_{m\to\infty}\frac{\absolute{\#(\Lambda_1\cap A_m) - \#(\Lambda_2\cap A_m)}}{\mu_1(\partial A_m)} = \infty.\]	
	By Theorem \ref{thm:non_BD_criterion} and Corollary \ref{cor:different_densities_implies_not_BD} there is no BD-map $\phi:\Lambda_1 \to \alpha\Lambda_2$ for any scaling constant $\alpha>0$, and so $\Lambda_1$ and $\Lambda_2$ are BD non-equivalent.
	
	To complete the proof, recall that the substitution rules $\sigma_1,\sigma_2\in\Sigma$ have the same substitution matrix. Therefore, standard counting arguments (see e.g. \cite{Solomon11}) imply that every Delone set associated with a tiling in $\XX_{\Sigma,\Omega}$ is BL equivalent to a lattice. 
\end{proof}

\begin{cor}\label{cor:rotation_by_pi_not_BD}
	There exists a Delone set $\Lambda$ corresponding to $\sigma_1$ with $\Lambda \not\bd -\Lambda$.
\end{cor}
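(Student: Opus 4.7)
The plan is to reduce the corollary to Theorem \ref{thm:continuously_many_things} via a rotational symmetry between $\sigma_1$ and $\sigma_2$.

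First, I would read off from Figures \ref{fig:SubstitutionRule2diagonals} and \ref{fig:SubstitutionRule2diagonals2} that $\sigma_2$ is the $\pi$-rotation of $\sigma_1$, in the sense that for every prototile $T \in \FF$ centered at the origin, the identity $\sigma_2(T) = -\sigma_1(T)$ holds, where $-P \df \{-x \mid x \in P\}$ is applied tile-wise to patches (preserving prototile types). Because $S$ and $R$ are themselves symmetric under $x \mapsto -x$ when centered at the origin, a short induction on $m$ then yields $\sigma_2^m(T) = -\sigma_1^m(T)$ for every such $T$ and every $m \in \N$.

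Next, I would place $R$ at the origin and consider the tilings $\TT[1]$ and $\TT[-1]$ constructed in the proof of Theorem \ref{thm:continuously_many_things} for the constant words $w^{1} = 1^{\infty}$ and $w^{-1} = 2^{\infty}$. From the construction $\TT[\gamma] = \bigcup_{m \in \N} w^{\gamma}(m).R$ one reads $\TT[1] = \bigcup_m \sigma_1^m(R)$ and $\TT[-1] = \bigcup_m \sigma_2^m(R)$, and the induction above immediately gives $\TT[-1] = -\TT[1]$.

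Finally, let $\Lambda$ be the Delone set associated with $\TT[1]$ by placing a point at the center of each tile. Since $\pi$-rotation about the origin maps tile centers to tile centers, the same rule applied to $-\TT[1] = \TT[-1]$ produces precisely $-\Lambda$. As $1 \ne -1$, Theorem \ref{thm:continuously_many_things} applied to $\gamma_1 = 1$, $\gamma_2 = -1$ yields $\Lambda \not\bd -\Lambda$, which is the corollary. The only non-routine step is the reflection identity $\sigma_2(T) = -\sigma_1(T)$ on centered prototiles, which I expect to be immediate from inspection of the two figures; once it is in hand, everything else is bookkeeping on top of Theorem \ref{thm:continuously_many_things}.
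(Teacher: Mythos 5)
Your proposal is correct and follows essentially the same route as the paper: identify $\sigma_1,\sigma_2$ with the constant words $1^\infty, 2^\infty$ (hence $\gamma=1$ and $\gamma=-1$), use the $\pi$-rotation symmetry to see that $-\TT[1]$ is a $\sigma_2$-tiling equal to $\TT[-1]$, and invoke the proof of Theorem \ref{thm:continuously_many_things} with $\gamma_1=1\neq-1=\gamma_2$. The extra detail you supply (the induction $\sigma_2^m(T)=-\sigma_1^m(T)$ and the matching of tile centers) is exactly the content the paper compresses into the remark that $\TT\mapsto-\TT$ is a bijection between $\XX_{\sigma_1}$ and $\XX_{\sigma_2}$.
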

\begin{proof}
Note that $\TT\mapsto-\TT$ is a bijection between $\XX_{\sigma_1}$ and $\XX_{\sigma_2}$. Identify the substitutions $\sigma_1$ and $\sigma_2$ with the ``mixed substitutions'' defined by the constant words $w_1=1^\infty$ and $w_2=2^\infty$, which in turn are associated with the real numbers $\gamma_1=1$ and $\gamma_2=-1$.  The proof follows from the fact that $1\neq-1$.
\end{proof}

The proof of Theorem \ref{thm:arbitrary-many-things} follows from the next result.
\begin{lem} \label{lem2:n+1-deterministic-subst}
	Let $q\in\N$, and for $p=0,\ldots, q$ denote $\varrho_p \df \sigma_1^p \sigma_2^{q-p}$, where $\sigma_1\sigma_2=\sigma_1\circ\sigma_2$. There exist tilings $\TT_0,\ldots,\TT_q$ defined by $\varrho_0,\ldots,\varrho_q$ with pairwise non-equivalent associated Delone sets.
\end{lem}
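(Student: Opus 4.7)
The plan is to recognize each composition $\varrho_p=\sigma_1^p\sigma_2^{q-p}$ as the left action of the length-$q$ word $1^p2^{q-p}$ in the mixed-substitution framework of Theorem \ref{thm:continuously_many_things}, and then transfer the counting developed there. For each $p\in\{0,\ldots,q\}$ I would introduce the periodic infinite word
\[ w_p \df (\underbrace{1\cdots1}_{p}\underbrace{2\cdots2}_{q-p})^\infty \in \Omega; \]
under the convention of \eqref{eq:action_of_a_finite_word} this gives $\act{w_p(Nq)}.T = \varrho_p^N(T)$ for every $T\in\FF$ and $N\in\N$. Because $M_{\sigma_1}=M_{\sigma_2}=M$, the substitution matrices coincide, $M_{\varrho_p}=M^q$, so all candidate Delone sets share the common natural density $\tfrac{2}{3}$ given by \eqref{eq:alpha_def}; Corollary \ref{cor:different_densities_implies_not_BD} then forces any putative BD-map $\Lambda_p\to\alpha\Lambda_{p'}$ to satisfy $\alpha=1$, and it suffices to rule out a BD-map $\Lambda_p\to\Lambda_{p'}$ for $p\neq p'$.

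Next, I would build each $\TT_p\in\XX_{\varrho_p}$ by the nested-union device of \S\ref{sec:continuously_many_non-BD_sets}: fixing a rectangle $R$ centered at the origin, a short induction from the hypothesis that the tile at the origin in $\sigma_i(R)$ is again $R$ (for $i=1,2$) shows the same for $\varrho_p(R)$, so the patches $\varrho_p^N(R)$ form an increasing chain of substitution patches whose union covers $\R^2$ and is a $\varrho_p$-substitution tiling. Let $\Lambda_p$ be obtained by placing a point at the center of each tile of $\TT_p$. For distinct $p,p'\in\{0,\ldots,q\}$ and each $N\in\N$, let $A_N\in\QQ_2^*$ be the region inside the common generation-$(Nq+1)$ rectangle that lies below its NW-SE main diagonal; this is simultaneously $\supp(P_{Nq}^{w_p(Nq)})$ in $\TT_p$ and $\supp(P_{Nq}^{w_{p'}(Nq)})$ in $\TT_{p'}$, since only the internal tiling depends on $p$.

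Lemma \ref{lem2:P_m^sigma_counting}, combined with the observation that each block $1^p2^{q-p}$ in $w_p$ contributes $2p-q$ to the signed sum appearing in \eqref{eq2:number_of_tiles_in_P_m}, gives
\begin{equation*}
\bigl|\#(\Lambda_p\cap A_N)-\#(\Lambda_{p'}\cap A_N)\bigr| \;=\; 3^{Nq}\cdot 2N\,|p-p'|.
\end{equation*}
Since $A_N$ sits inside a $3^{Nq+1}\times 3^{Nq}$ box and its boundary consists of the bottom and left sides of this box together with a staircase approximation of its diagonal, the same perimeter estimate as in \eqref{eq:number_of_tiles_in_P_m+volP_m} yields $\mu_1(\partial A_N)\le 8\cdot 3^{Nq}$. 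Hence the ratio in \eqref{eq:non_BD_condition} grows linearly in $N$, and Theorem \ref{thm:non_BD_criterion} delivers $\Lambda_p\not\bd\Lambda_{p'}$. The only real friction point is confirming compatibility between the inner-to-outer reading of words in \eqref{eq:action_of_a_finite_word} and the composition $\sigma_1^p\sigma_2^{q-p}$, so that $\#P_m^{w_p(m)}$ may be read off directly from Lemma \ref{lem2:P_m^sigma_counting}; once this bookkeeping is pinned down, the proof amounts to specializing the argument of Theorem \ref{thm:continuously_many_things} to the periodic words $w_p$ in place of the approximating words $w^\gamma$.
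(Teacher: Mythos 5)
Your proposal is correct and follows essentially the same route as the paper: identify $\varrho_p$ with the periodic word $(1^p2^{q-p})^\infty$, count tiles in the sub-diagonal patches via Lemma \ref{lem2:P_m^sigma_counting}, and apply Theorem \ref{thm:non_BD_criterion} together with Corollary \ref{cor:different_densities_implies_not_BD} as in the proof of Theorem \ref{thm:continuously_many_things}. The only (harmless) difference is that you work along the subsequence $m=Nq$, where the signed sum equals $N(2p-q)$ exactly, whereas the paper handles general $m=\ell q+r$ with an error term bounded by $2q$.
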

\begin{proof}[Sketch of proof] 
	The arguments are similar to those discussed above. Fix $p\in\{0,\ldots,q\}$. The substitution rule $\varrho_p$ corresponds to the infinite word 
	\[ w^p \df 1^p 2^{q-p} \,  1^p 2^{q-p} \,  1^p 2^{q-p} \,  1^p 2^{q-p} \, \cdots, \]
	where $a^k$ denotes $k$ consecutive appearances of $a\in\{1,2\}$. Recall that for any $m\in\N$ there exist $\ell \ge 0$ and  $0\le r\le q-1$ so that  $m=\ell q + r$. Then
	
	\begin{equation*}
\frac{1}{m}\sum_{j=1}^{\ell q}
\begin{cases}
1, &\text{if }\:  w^p_j=1\\
-1,&\text{if }\:  w^p_j=2
\end{cases} =\frac{\ell q}{m} \cdot \frac{1}{\ell q}(\ell p - \ell (q-p))= \left(1-\frac{r}{m}\right)\gamma_p
	\end{equation*}
where $\gamma_p = \frac{2p-q}{q}$. It follows that 
	\begin{equation*}
	\absolute{\gamma_p - \frac{1}{m}\sum_{j=1}^{m}
		\begin{cases}
		1, &\text{if }\: w^p_j=1\\
		-1,&\text{if }\: w^p_j=2
		\end{cases}} 
	= \absolute{ \frac{r}{m} \gamma_p-  \frac{1}{m}\sum_{j=1}^{r}
		\begin{cases}
		1, &\text{if }\: w^p_j=1\\
		-1,&\text{if }\: w^p_j=2
		\end{cases}}\le \frac{2q}{m}.  
	\end{equation*}
	
	Adjusting the proof of Lemma \ref{lem2:finite_words_approx._gamma} so that the right hand side of equation \eqref{eq2:sum_close_to_any_gamma} has numerator $2q$ instead of $1$, the above shows that the word $w^p$ induces adjusted approximations for $\gamma_p$. Using these approximations and similarly to the computation appearing in \eqref{eq:numerator_estimate_for_gammas2}, for distinct $p_1, p_2\in\{0,\ldots,q\}$ we obtain 
	\begin{equation*}
	\absolute{\#P_m^{w^{\gamma_{p_2}}(m)} - \#P_m^{w^{\gamma_{p_1}}(m)}} 
	\ge 3^m  \absolute{\frac{m}{2q}(\gamma_{p_2}-\gamma_{p_1}) - c_{\gamma_{p_2},\gamma_{p_1}}}.\notag
	\end{equation*}
	Since $q$ is fixed and $p_1\neq p_2$, we have
	\[\lim_{m\to\infty}\frac{\absolute{\#P_m^{w^{\gamma_{p_2}}(m)} - \#P_m^{w^{\gamma_{p_1}}(m)}} }{\mu_1(\partial A_m)} = \infty.\]	
	By arguments described in the proof of Theorem \ref{thm:continuously_many_things}, this settles the lemma.
\end{proof}

\section{Acting from the left vs. acting from the right}\label{sec:right-left}
Consider a mixed substitution system $\Sigma=(\sigma_1,\ldots,\sigma_k)$ on a set of prototiles $\FF$ in $\R^d$, and recall the definitions for mixed substitution tilings and spaces given in \S \ref{sec:preliminaries}. The action of a finite word $a=(a_1,\ldots,a_m)\in\AA^m$  on a patch $P$ can be viewed as an action from the left. Naturally, one may also consider an analogous \textit{action from the right}, defined by
\begin{equation*}\label{eq:right_action_of_a_finite_word}
P.\act{a} \df ( ( \cdots (P)\sigma_{a_1} \cdots)\sigma_{a_{m-1}})\sigma_{a_m} .
\end{equation*}
Here $(P)\sigma_i$ stands for the patch obtained by applying $\sigma_i$ to all tiles in $P$, that is $(P)\sigma_i=\sigma_i(P)$, and the notation $(P)\sigma_i$ is used merely to emphasize that the action considered is from the right.

Set $\AA=\{1,\ldots, k\}$ and $\Omega=\AA^\N$. Given $w\in\Omega$, denote 
\[\RR_w \df \left\{ T.\act{w(m)} \mid T\in \FF, m\in\N  \right\}.\]
The \emph{(right)-mixed substitution tiling space} $\YY_{\Sigma,w}$ is the collection of tilings $\TT$ of $\R^d$ with the property that for each patch $P \subset \TT$ there is a patch $P' \in \RR_w$ that contains a translated copy of $P$ as a sub-patch. Denote  $\YY_{\Sigma,\Omega}=\bigcup_{w\in\Omega} \YY_{\Sigma,w}$. 

Each of the two definitions of mixed substitution tilings has its advantages. For example, when acting from the left, the structure of the finite generation patches is clear. When acting from the right, as done recently in \cite{SchmiedingTrevino}, it is clear how an infinite word $w\in \Omega$ acts. One reason why the left action is popular is the fact that, unlike tilings spaces  $\YY_{\Sigma,w}$, the tiling spaces $\XX_{\Sigma,w}$ are minimal with respect to the translation action by $\R^d$, see \cite[Proposition 6]{Durand}, and
\cite[Proposition 2.12]{Rust} for the one-dimensional case. 

Recall the definition of uniform primitivity given in \S \ref{sec:preliminaries}. A nice property of $\YY_{\Sigma,\Omega}$ is that for uniformly primitive systems $\YY_{\Sigma,\Omega} = \YY_{\Sigma,w}$ for a generic word $w$, as we show in Lemma \ref{lem:YY_Omega=YY_w}. First, let $\nu$ be a probability measure on $\AA$ with full support. Denote by $\widetilde{\nu}$ the product measure on $\Omega$ that arises from $\nu$. Given a finite word $a\in\AA^m$ for some $m\in\N$, clearly $\widetilde{\nu}$-almost every sequence in $\Omega$ must contain a (translated) copy of $a$. Since $\AA^*$ is countable, it follows that $\widetilde{\nu}$-almost every sequence in $\Omega$ contains all finite words with letters from $\AA$. Denote by $\widetilde{\Omega}$ the set of such sequences.

\begin{lem}\label{lem:YY_Omega=YY_w}
	Let $\Sigma$ be uniformly primitive mixed substitution system. Then $\YY_{\Sigma,\Omega} = \YY_{\Sigma,w}$ for every $w\in\widetilde{\Omega}$. 
\end{lem}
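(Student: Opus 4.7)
The forward inclusion $\YY_{\Sigma,w}\subseteq\YY_{\Sigma,\Omega}$ is immediate from the definitions, so I will focus on the reverse. Fix $w\in\widetilde{\Omega}$ and $\TT\in\YY_{\Sigma,\Omega}$, so $\TT\in\YY_{\Sigma,w'}$ for some $w'\in\Omega$. Given any patch $P\subset\TT$, the goal is to produce an element of $\RR_w$ that contains a translate of $P$ as a sub-patch. By the defining property of $\YY_{\Sigma,w'}$, there exist $T'\in\FF$ and $m'\in\N$ with $P$ a translate of a sub-patch of $T'.\act{w'(m')}$. The plan is to realize the word $w'(m')$ as a sub-word of some long prefix of $w$, and then exploit uniform primitivity to find, inside the patch built from that prefix, a tile of type $T'$ on which the final $m'$ substitutions recreate the desired patch.

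The first step is the elementary observation that every finite word appears in $w$ infinitely often, not merely once: for any $a\in\AA^*$, any $N\in\N$, and any fixed letter $\ell\in\AA$, the word $a\,\ell^N\,a$ is itself a finite word and so must occur in $w$, which forces an occurrence of $a$ at position at least $N$. Let $m_0$ denote the uniform primitivity constant of $\Sigma$, and choose $j\geq m_0$ with $(w_{j+1},\ldots,w_{j+m'})=w'(m')$. From the straightforward associativity of the right action one obtains, for any $T\in\FF$,
\begin{equation*}
T.\act{w(j+m')}\;=\;\bigl(T.\act{w(j)}\bigr).\act{w'(m')}.
\end{equation*}

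Applying uniform primitivity to the length-$m_0$ suffix of $w(j)$ (starting from any tile in $T.\act{w(j-m_0)}$) shows that the patch $T.\act{w(j)}$ contains tiles of every type in $\FF$; in particular it contains some tile $T''$ of type $T'$. Applying $\act{w'(m')}$ from the right to all of $T.\act{w(j)}$ then replaces this distinguished tile with a translated copy of $T'.\act{w'(m')}$, so $T.\act{w(j+m')}\in\RR_w$ contains a translate of $T'.\act{w'(m')}$, and hence a translate of $P$. The one point requiring care is that the paper's definition of uniform primitivity is phrased through the left action while the argument above uses its right-action analogue; fortunately the two versions are equivalent, since $T.\act{b}=\act{\mathrm{rev}(b)}.T$ and the hypothesis quantifies over all length-$m_0$ words, so reversing a word of length $m_0$ yields the right-action statement needed here.
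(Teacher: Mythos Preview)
Your proof is correct and follows essentially the same approach as the paper's: locate the word $w'(m')$ as a suffix of a long enough prefix $w(l)$, use uniform primitivity on the preceding $\ge m_0$ letters to find a tile of the correct type, and then observe that the remaining right action reproduces a translate of $T'.\act{w'(m')}$. You are somewhat more careful than the paper in two places---you justify explicitly why the finite word $w'(m')$ can be found arbitrarily far along in $w$, and you note (correctly) that the left-action phrasing of uniform primitivity is equivalent to the right-action version via word reversal---but the underlying argument is the same.
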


\begin{proof}
	The inclusion $\YY_{\Sigma,\Omega} \supset \YY_{\Sigma,w}$ is obvious. For the second inclusion, let $\TT\in \YY_{\Sigma,\Omega}$ 
	and let $P\subset \TT$ be a patch. 
	Since $\TT\in \YY_{\Sigma,\Omega}$ there exists a word $w'\in\Omega$ for which $\TT\in\YY_{\Sigma,w'}$. Therefore, there is some patch $P'$ of the form $P' = T.\act{w'(n)}\in \RR_{w'}$ that contains $P$ as a sub-patch. By uniform primitivity of $\Sigma$, there exists $m_0\in\N$ so that for every word $a\in\AA^{m}$ with $m\ge m_0$ and every $S\in\FF$, the patch $S.\act{a}$ contains tiles of all types and in particular a copy of $T$. Since $w\in\widetilde{\Omega}$, there exists some $l\ge n+m_0$ such that $w'(n)$ is a suffix of $w(l)$. By definition of the action from the right, this implies that $P'' \df T.\act{w(l)}\in\RR_w$ contains $P'$ as a sub-patch, hence it also contains $P$ as a sub-patch.
	Since $P$ was arbitrary, by definition $\TT\in \YY_{\Sigma,w}$.
\end{proof}

\noindent \textbf{Remark.} Lemma \ref{lem:YY_Omega=YY_w} fails for the action from the left. In fact, assuming a simple condition of non-degeneracy on $\Sigma$, it is not hard to show that for any $w \neq u$ in $\Omega$ one has $\XX_{\Sigma,w} \nsubseteq \XX_{\Sigma,u}$. \\ 

The following lemma shows that Theorem  \ref{thm:continuously_many_things} also holds for $\YY_{\Sigma,\Omega}$. 

\begin{lem}\label{lem:tiling_spaces_are_equal}
	Let $\Sigma$ be a mixed substitution system. Then 
	$\XX_{\Sigma,\Omega} \subset \YY_{\Sigma,\Omega}$. 	
\end{lem}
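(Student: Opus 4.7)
The plan is, given $\TT \in \XX_{\Sigma,w}$ for some $w \in \Omega$, to construct a single infinite word $w' \in \Omega$ with $\TT \in \YY_{\Sigma,w'}$. The starting observation is that for every $m\in\N$ and every $T\in\FF$,
\[
w(m).T \;=\; T.u_m, \qquad u_m \df (w_m, w_{m-1}, \ldots, w_1) \in \AA^m,
\]
so each patch in $\PP_w$ is automatically a right-action patch, but with a \emph{reversed} finite word. The main obstacle is that the words $u_m$ are not nested prefixes of any common infinite word -- in fact $u_m$ is a suffix of $u_{m+1}$ rather than a prefix -- so one cannot take $w'$ to be a naive ``reverse'' of $w$.

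To bypass this, I would exploit primitivity. Choose $N\in\N$ large enough so that $\sigma_1^N(T)$ contains tiles of every type for every $T\in\FF$; such $N$ exists by primitivity of $\sigma_1$ and the Perron--Frobenius theorem. Pick an exhausting sequence $P_1 \subset P_2 \subset \cdots$ of patches of $\TT$ with $\bigcup_k P_k = \TT$, and use $\TT \in \XX_{\Sigma,w}$ to select, for each $k$, indices $m_k\in\N$ and tiles $T_k\in\FF$ such that a translate of $P_k$ sits inside $w(m_k).T_k = T_k.u_{m_k}$. Now set
\[
w' \df 1^N u_{m_1} 1^N u_{m_2} 1^N u_{m_3} \cdots \;\in\; \Omega,
\]
the concatenation of successive blocks of $N$ consecutive $1$'s interleaved with the reversed finite words $u_{m_k}$.

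For each $k$, write the prefix $w'(n_k) = b_k \cdot u_{m_k}$, where $b_k$ ends with the block $1^N$. The right-action identity $T.(a\cdot b)=(T.a).b$ then gives $T.w'(n_k) = (T.b_k).u_{m_k}$, and since $b_k$ ends with $1^N$ one has $T.b_k = \sigma_1^N(T.b'_k)$; every tile of $T.b_k$ is therefore a $\sigma_1^N$-image of a single tile, so by the choice of $N$ the patch $T.b_k$ contains tiles of every type, in particular a translate of $T_k$. Applying $u_{m_k}$ then embeds a translate of $T_k.u_{m_k}$, hence a translate of $P_k$, into $T.w'(n_k)\in\RR_{w'}$. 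Since $(P_k)$ is exhausting, every patch $P\subset \TT$ lies up to translation inside some $T.w'(n_k)$, yielding $\TT\in\YY_{\Sigma,w'}\subset\YY_{\Sigma,\Omega}$. The role of the ``reset'' blocks $1^N$ is precisely to guarantee, at each stage, that the patch constructed so far already contains every tile type, which is what compensates for the non-nesting of the words $u_m$.
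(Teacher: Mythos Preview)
Your argument is correct. You and the paper share the same two pivots: the reversal identity $w(m).T = T.\overline{w(m)}$, and the use of a block $1^N$ (primitivity of $\sigma_1$) as a ``reset'' so that the preceding patch already contains every tile type before the relevant reversed word is applied from the right. The difference lies in how the target word is produced. You build, for each fixed tiling $\TT$, a bespoke word $w' = 1^N u_{m_1} 1^N u_{m_2}\cdots$ tailored to an exhaustion of $\TT$. The paper instead fixes once and for all a single word $\widetilde{w}\in\widetilde{\Omega}$ containing \emph{every} finite word over $\AA$, and observes that each $T.\overline{w(m)}$ then occurs as a sub-patch of some $S.\widetilde{w}(l)$ (choosing $l$ so that, say, $1^N\overline{w(m)}$ is a suffix of $\widetilde{w}(l)$). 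This yields the sharper statement $\XX_{\Sigma,\Omega}\subset \YY_{\Sigma,\widetilde{w}}$ for a \emph{single} $\widetilde{w}$, which combines directly with Lemma~\ref{lem:YY_Omega=YY_w} to give the corollary that follows. Your construction is more explicit and perfectly adequate for the lemma as stated, but the word $w'$ depends on $\TT$, so it does not immediately give that uniform containment.
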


\begin{proof}
	For a finite word $a = (a_1,\ldots,a_m)\in\AA^m$ let $\overline{a} =(a_m,\ldots,a_1)$.  Clearly, for every $m\in\N$, $T\in\FF$, and a word $w(m) \in\AA^m$, we have $\act{w(n)}.T = T.\act{\overline{w(n)}}$. Note that when acting from the right with $w(m) \in \AA^m$, the structure of the patches of the first $r$ generations in the resulting pattern is determined by the suffix of length $r$ of $w(m)$. Hence for every  ${\widetilde{w}}\in\widetilde{\Omega}$, and every $w\in\Omega$, one has  $\PP_{w}\subset\RR_{\widetilde{w}}$. By the definition of the tiling spaces, this implies that $\XX_{\Sigma,w} \subset \YY_{\Sigma,{\widetilde{w}}}$ for any $w\in\Omega$. Thus $\XX_{\Sigma,\Omega}\subset \YY_{\Sigma,{\widetilde{w}}}$, and hence $\XX_{\Sigma,\Omega} \subset \YY_{\Sigma,\Omega}$.
\end{proof}  

Since the system $\Sigma=(\sigma_1,\sigma_2)$ defined in \S \ref{sec:continuously_many_non-BD_sets} is clearly uniformly primitive, we  can combine the above with Theorem \ref{thm:continuously_many_things} to obtain the following result.

\begin{cor}
	 Let $\FF$ and $\Sigma=(\sigma_1,\sigma_2)$ be as in \S \ref{sec:continuously_many_non-BD_sets}. Then for $\widetilde{\nu}$-almost every $w\in\Omega=\{1,2\}^\N$, the space $\YY_{\Sigma,w}$ contains continuously many tilings with pairwise BD non-equivalent associated Delone sets. 	
\end{cor}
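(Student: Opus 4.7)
The plan is to chain together the three results that immediately precede the corollary (Theorem \ref{thm:continuously_many_things}, Lemma \ref{lem:YY_Omega=YY_w}, and Lemma \ref{lem:tiling_spaces_are_equal}) together with a measure-theoretic observation about $\widetilde{\Omega}$. First I would check that the system $\Sigma=(\sigma_1,\sigma_2)$ from \S\ref{sec:continuously_many_non-BD_sets} is uniformly primitive. This is immediate from the pictures: the substitution matrix $M$ is strictly positive (all four entries $6,9,1,6$ are nonzero), so already $\sigma_i(T)$ contains tiles of both types for each $i\in\{1,2\}$ and each $T\in\FF$, and one may take $m_0=1$.

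Next I would observe that $\widetilde{\nu}(\widetilde{\Omega})=1$. Indeed, for any fixed finite word $a\in\AA^*$ the product measure $\widetilde{\nu}$ assigns probability one to the event that $a$ occurs in the sequence (this is a standard Borel--Cantelli argument using the full support of $\nu$), and the countable intersection over all $a\in\AA^*$ of these full-measure events is again of full measure. So $\widetilde{\nu}$-a.e.\ $w\in\Omega$ lies in $\widetilde{\Omega}$.

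Fix such a $w\in\widetilde{\Omega}$. By Lemma \ref{lem:YY_Omega=YY_w} (applied to the uniformly primitive system $\Sigma$) we have $\YY_{\Sigma,w}=\YY_{\Sigma,\Omega}$, and by Lemma \ref{lem:tiling_spaces_are_equal} we have $\XX_{\Sigma,\Omega}\subset \YY_{\Sigma,\Omega}$. Combining these two gives the containment
\[
\XX_{\Sigma,\Omega}\subset \YY_{\Sigma,w}.
\]
Now Theorem \ref{thm:continuously_many_things} produces a family $\{\TT[\gamma]\}_{\gamma\in[-1,1]}\subset\XX_{\Sigma,\Omega}$ of continuously many tilings whose associated Delone sets are pairwise BD non-equivalent. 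By the containment above, these same tilings all lie in $\YY_{\Sigma,w}$, which proves the corollary.

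There is no real obstacle here; the proof is essentially a two-line composition of the lemmas already established. The only mildly nontrivial point is the probabilistic one (that $\widetilde{\nu}(\widetilde{\Omega})=1$), but this is a standard product-measure fact and was in fact already noted in the paragraph preceding Lemma \ref{lem:YY_Omega=YY_w}, so I would cite that passage rather than reprove it.
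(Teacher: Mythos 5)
Your proof is correct and follows exactly the route the paper intends: verify uniform primitivity (immediate since $M$ has all entries positive, so $m_0=1$ works), note that $\widetilde{\nu}(\widetilde{\Omega})=1$, and chain Lemma \ref{lem:YY_Omega=YY_w}, Lemma \ref{lem:tiling_spaces_are_equal} and Theorem \ref{thm:continuously_many_things}. The paper itself gives no separate proof beyond the sentence ``combine the above,'' so your write-up is simply a fleshed-out version of the intended argument.
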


\end{document}